\documentclass[11pt]{amsart}
\usepackage{amsmath, amssymb, amsbsy, amsfonts, amsthm, latexsym, amsopn, amstext, amsxtra, euscript, amscd, color, mathrsfs}
\usepackage[normalem]{ulem}
\usepackage{soul}

\PassOptionsToPackage{hyphens}{url}\usepackage{hyperref}
\makeatletter
\@namedef{subjclassname@2020}{%
  \textup{2020} Mathematics Subject Classification}
\makeatother
 
 \usepackage[capbesideposition=outside,capbesidesep=quad]{floatrow}

\restylefloat{table}
     \usepackage{booktabs}       
\usepackage{multirow,caption}
\usepackage{pifont}
\usepackage{amscd}
\usepackage{color,enumerate}
\newcommand{\RNum}[1]{\lowercase\expandafter{\romannumeral #1\relax}}

\usepackage[colorinlistoftodos,prependcaption,textsize=tiny]{todonotes}

\newtheorem{thm}{Theorem}[section]
\newtheorem{lem}[thm]{Lemma}

\newtheorem{prop}[thm]{Proposition}
\newtheorem{exmp}[thm]{Example}

\newtheorem{rmk}[thm]{Remark}

\newtheorem{thm-con}[thm]{Theorem-Conjecture}
\numberwithin{equation}{section}

\theoremstyle{definition}
\newtheorem{defn}[thm]{Definition}

\newcommand{\F}{\mathbb F}

\def\Tr{{\rm Tr}}

\def\Trq{{\rm Tr}_q^{q^2}}

\begin{document}
\title[New classes of trace-form permutation polynomials]{New classes of trace-form permutation polynomials and their compositional inverses}

\author[K. Garg]{Kirpa Garg}
\address{Department of Computer Science, University of Rouen Normandy, Rouen 76000, France}
\email{kirpa.garg@gmail.com}
\maketitle

\begin{abstract}
We construct permutation polynomials of the form $x+\gamma\,\Trq(h(x))$ over the
finite field $\F_{q^2}$. More precisely, we present two families of permutation
polynomials whose coefficients range over all elements of the field rather than being
restricted to a proper subfield. We also compute the compositional inverses of
both families. Our techniques involve the evaluation of certain
Kloosterman sums together with the analysis of some equations over finite
fields, and we believe these can be of independent interest.
\end{abstract}

\noindent\textbf{Keywords:} Permutation polynomials, finite fields, compositional inverse, character sums, trace function.

\noindent\textbf{MSC 2020:}  11T06, 12E20.

\section{Introduction}
Let $q=p^m$, where $p$ is a prime and $m \geq 1$ is an integer, and let $\F_q$ be the finite field with $q$ elements. As usual, $\F_q^*$ stands for the multiplicative group $\F_q \setminus \{0\}$ and $\F_q[x]$ for the polynomial ring over $\F_q$. Recall that each function from $\F_q$ to itself corresponds to exactly one polynomial in $\F_q[x]$ of degree less than $q$. A polynomial $f(x) \in \F_q[x]$ is said to be a permutation polynomial (PP) of $\F_q$ whenever the map $a \mapsto f(a)$ acts bijectively on $\F_q$. Owing to their extensive applications in coding theory~\cite{Ding, La}, cryptography~\cite{Bertoni, Dwork, SC}, combinatorial designs~\cite{DY}, and various other areas of mathematics and engineering, permutation polynomials over finite fields have attracted considerable attention over the past several decades. 

Permutation polynomials with a simple algebraic form are of particular interest. In particular, polynomials of the shape
$G(x)+\gamma\,\Tr_q^{q^n}\!\big(H(x)\big), \gamma\in\F_{q^n}^{*},$
are significant because of their close ties with several central objects in cryptography and coding theory: the passage from $G(x)$ to $G(x)+\gamma\Tr_q^{q^n}(H(x))$ is an instance of the switching method of Edel and Pott~\cite{EP}, originally devised in the study of APN functions, while the trace term links such polynomials to Boolean functions and their linear structures. This form was subsequently revisited by Charpin and Kyureghyan~\cite{CK}, who
derived six families of PPs of $\F_{2^n}$ by exploiting linear structures of Boolean functions.

In 2016, Kyureghyan and Zieve~\cite{KZ} thoroughly analysed the case
$G(x)=x$ and $H(x)=x^{k}$ with $\gamma\in\F_{q^n}^{*}$, $q$ odd, $n>1$ and $q^n<5000$, and organised all
but five of the resulting examples into nine infinite families. Motivated by
this, Ma and Ge~\cite{Ma} extended two of the exceptional cases to infinite
families. In even characteristic, Li et al.~\cite{Li} carried out the analogous
programme for $cx+\Tr_q^{q^n}(x^k)$ with $q=2^m$, $c\in\F_{q^n}^{*}$, $m>1$ and
$mn<14$, producing fifteen new families, again with four sporadic examples left
unexplained. In 2019, Zha, Hu and Zhang~\cite{Zha} gave new constructions
accounting for the remaining exceptional cases of~\cite{KZ} and~\cite{Li}.
Furthermore, Yuan~\cite{Yuan} introduced a novel algebraic framework for
permutations of $\F_{q^n}$ and used it to produce new families of PPs of the
form $G(x)+\gamma\,\Tr_q^{q^n}\!\big(H(x)\big)$; notably, his construction resolved an open problem posed by
Charpin and Kyureghyan~\cite{CK}, which called for a characterization of PPs of
this type in which $G(x)$ is neither a permutation nor a linearised polynomial.
 
Recently, Jiang et al.\ \cite{Jiang} characterized when
$f(x)=x+\gamma\Tr_q^{q^2}(h(x))$ permutes $\F_{q^2}$, where $q=2^m$,
$\gamma\in\F_{q^2}^{*}$ and $h(x)=c_1x+c_2x^2+c_3x^3+x^{q+2}$ or
$h(x)=c_1x+c_2x^2+c_3x^3+c_3x^{q+2}+x^{2q-1}$, with $c_i\in\F_2$.
In every construction of this shape known to us, the coefficients of $h$ are
confined to a proper subfield of $\F_{q^2}$. While this is natural for the sake
of computation, it turns out to be a genuine restriction rather than a mere
convenience. Indeed, the conditions determining whether $f$ permutes $\F_{q^2}$ involve
$\Trq(c_1)$ and $\Trq(c_2)$, both of which vanish for all $c_1,c_2\in\F_q$ in
even characteristic. In particular, the branch of
Theorem~\ref{Result1} below that requires $\Tr_q^{q^2}(c_2)\neq0$ is
unreachable from the base field, while in the remaining branches the
coefficients enter only through $\gamma$ or through a single scalar
combination.

In this paper we give necessary and sufficient conditions on
$c_1,c_2,c_3,\gamma\in\F_{q^2}$ for the resulting polynomial to permute
$\F_{q^2}$, and we compute their corresponding compositional inverses. Our results include families of permutation polynomials with $c_2\in\F_{q^2}\setminus\F_q$, as
well as families with $\gamma\in\F_{q^2}\setminus\F_q$, for which a characterization is not yet known.
Specialising the coefficients recovers several previously known results~\cite{HKK, Jiang, SKP}
within a single framework. 

The remainder of this paper is organised as follows. In Section~\ref{S2}
we recall the definitions and lemmas used later on. In Section~\ref{S3}
we give a complete characterization of the two families of permutation
polynomials described above, and in Section~\ref{S4} we determine their
compositional inverses. In Section~\ref{S5} we investigate
quasi-multiplicative equivalence between the permutation polynomials
obtained here and the known classes in the literature, thereby confirming
that the new families are genuinely inequivalent to those already known.
Section~\ref{S6} concludes the paper.

\section{Preliminaries}\label{S2}
Throughout the paper, we denote $q=2^m$. Throughout the paper, we denote by $\mbox{Tr}_{2^m}^{2^n}$ the (relative) trace function from $\F_{2^n} \rightarrow \F_{2^m}$, i.e., $ \mbox{Tr}_{2^m}^{2^n}(x)=\sum^{\frac{n-m}{m}}_{i=0}x^{2^{mi}},$ where $m$ and $n$ are positive integers with $m \mid n$. We now recall some results that we will use in the subsequent sections.

\begin{defn}{(Kloosterman Sum)}
Let $\chi$ be a non-trivial additive character of $\F_q$ and let $a,b \in \F_q$. Then the sum 
$$
K(\chi;a,b)=\sum_{x \in \F_q^*}\chi (ax+bx^{-1})
$$
is called a Kloosterman sum.
\end{defn}

In the following, we recall characterization of factors of  a cubic polynomial over finite field of even characteristic. 
\begin{lem}\label{L1}
Let $a, b \in \F_{2^n}$ , $b \neq 0$ and $f(x) = x^3 + ax + b, g(y) = y^2 + by + a^3$. Let $y_1$, $y_2$ be two roots of $g(y)$. The factorizations of $f(x)$ are characterized as follows:
\begin{enumerate}
\item $f = (1, 1, 1)$ if and only if $\Tr_2^{2^n} \left( \frac{a^3}{b^2} \right) = \Tr_{2}^{2^n}(1)$, and $y_1, y_2$ are cubes in $\F_{2^n}$ (resp. $\F_{2^{2n}}$) if $n$ is even (resp. odd);
\item $f = (1, 2)$ if and only if $\Tr_2^{2^n} \left( \frac{a^3}{b^2} \right) \neq \Tr_2^{2^n}(1)$;
\item $f = (3)$ if and only if $\Tr_2^{2^n} \left( \frac{a^3}{b^2} \right) = \Tr_2^{2^n}(1)$, and $y_1, y_2$ are not cubes in $\F_{2^n}$ (resp. $\F_{2^{2n}}$) if $n$ is even (resp. odd).
\end{enumerate}
\end{lem}

\section{Two families of permutation polynomials}\label{S3}

The following lemma will be used to determine necessary and sufficient conditions for our first family of polynomials to be a permutation.

\begin{lem}\label{im}
 Let \(q=2^m\), and let \(\alpha \in \mathbb{F}_{q^2}\setminus \mathbb{F}_q\), then $\phi:\mathbb{F}_{q^2}\to\mathbb{F}_q$ defined as
 $$\phi(x)=\Trq\!\big(\alpha(x^2+x)\big)$$
 is surjective on $\F_q$.
\end{lem}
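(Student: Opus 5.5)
The plan is to use linear algebra over $\F_2$ together with the nondegeneracy of the absolute trace form. The map $\phi$ is $\F_2$-linear from $\F_{q^2}$ to $\F_q$, because $x\mapsto x^2+x$ is additive in characteristic $2$ and $\Trq$ is $\F_q$-linear. So $\phi(\F_{q^2})$ is an $\F_2$-subspace of $\F_q$. A proper $\F_2$-subspace of $\F_q$ is annihilated by some nonzero functional $y\mapsto \Tr_2^{q}(cy)$ with $c\in\F_q^*$, since every $\F_2$-linear functional on $\F_q$ has this form. Hence it suffices to show that for every $c\in\F_q^*$ the map $x\mapsto \Tr_2^{q}(c\,\phi(x))$ is not identically zero on $\F_{q^2}$.

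Fix $c\in\F_q^*$. By transitivity of traces, and because $c\in\F_q$ can be moved inside $\Trq$,
$$\Tr_2^{q}(c\,\phi(x))=\Tr_2^{q^2}\big(c\alpha x^2+c\alpha x\big).$$
Put $\beta=c\alpha$. The absolute trace is invariant under Frobenius, so applying the inverse Frobenius gives $\Tr_2^{q^2}(\beta x^2)=\Tr_2^{q^2}(\beta^{1/2}x)$, where $\beta^{1/2}=\beta^{2^{2m-1}}$. Therefore
$$\Tr_2^{q}(c\,\phi(x))=\Tr_2^{q^2}\big((\beta^{1/2}+\beta)x\big).$$

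Because the trace bilinear form on $\F_{q^2}$ is nondegenerate, this functional vanishes identically only if $\beta^{1/2}=\beta$. That condition is equivalent to $\beta^2=\beta$, i.e.\ $\beta\in\F_2$. Since $c\neq0$ and $\alpha\neq0$, we would need $\beta=1$, so $\alpha=c^{-1}\in\F_q$. This contradicts $\alpha\notin\F_q$. Hence no nonzero $c$ annihilates $\phi(\F_{q^2})$, and $\phi$ is surjective onto $\F_q$.

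Every step is routine. The only point that needs care is the Frobenius step rewriting $\Tr_2^{q^2}(\beta x^2)$ as $\Tr_2^{q^2}(\beta^{1/2}x)$, together with the correct description of the dual space of $\F_q$ over $\F_2$.
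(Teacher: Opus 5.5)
Your proof is correct, and it is the dual of the paper's argument rather than the same one.

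\textbf{The paper's route.} The paper proves surjectivity by showing directly that $|\ker\phi|=q$ and then applying rank--nullity. It factors $\phi=\Lambda\circ\eta$ with $\eta(x)=x^2+x$ and $\Lambda(z)=\Trq(\alpha z)$. It then uses the Artin--Schreier description $\mathrm{Im}(\eta)=\ker\Tr_2^{q^2}$, with $\eta$ two-to-one onto its image, and identifies $\ker\Lambda=\alpha^{-1}\F_q$. Finally it counts $\ker\Lambda\cap\mathrm{Im}(\eta)$: since $\Tr_2^{q^2}(\alpha^{-1}\lambda)=\Tr_2^q(\lambda\Trq(\alpha^{-1}))$ is a nonzero functional on $\F_q$, this intersection has exactly $q/2$ elements.

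\textbf{Your route.} You work on the other side of the duality and show that the annihilator of $\phi(\F_{q^2})$ in the $\F_2$-dual of $\F_q$ is trivial. Pulling $y\mapsto\Tr_2^q(cy)$ back through $\phi$ and using Frobenius invariance amounts to computing the adjoint of $x\mapsto x^2+x$ with respect to the trace form. This gives the functional $x\mapsto\Tr_2^{q^2}\bigl((\beta^{1/2}+\beta)x\bigr)$ with $\beta=c\alpha$. Nondegeneracy then reduces everything to the condition $\beta\notin\F_2$.

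\textbf{Comparison.} Your argument needs neither the Artin--Schreier image theorem nor any counting, and it is shorter. It also yields a little more at no extra cost. For any $\alpha\neq0$, the annihilator of the image is trivial when $\alpha\notin\F_q$ and is spanned by $c=\alpha^{-1}$ when $\alpha\in\F_q^*$. So in the excluded case the image is exactly the hyperplane $\{y:\Tr_2^q(\alpha^{-1}y)=0\}$, which explains why the hypothesis $\alpha\notin\F_q$ is sharp. The paper's route is more hands-on instead: it describes $\ker\phi$ explicitly as the $\eta$-preimage of a hyperplane of $\alpha^{-1}\F_q$.
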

\begin{proof}
Since $\phi$ is $\F_2$-linear, by the rank--nullity theorem it suffices to show that $\dim_{\F_2}\ker\phi=m$. Write $\phi=\Lambda\circ \eta$, where $\eta(x)=x^2+x$ and
$\Lambda(z)=\operatorname{Tr}_{q^2/q}(\alpha z)$. The map $\Lambda$ is
$\mathbb{F}_q$-linear, while $\eta$ is $\mathbb{F}_2$-linear with
$\ker \eta=\{0,1\}=\mathbb{F}_2$; hence $\eta$ is two-to-one onto $\operatorname{Im}(\eta)$. From~\cite{LN}[Theorem 2.25], we can write $$\operatorname{Im}(\eta)=\{x\in\mathbb{F}_{q^2}:\operatorname{Tr}_{2}^{q^2}(x)=0\}=\ker(\Tr_{2}^{q^2}(.)).$$
Since $\ker\phi=\eta^{-1}(\ker\Lambda \cap  \operatorname{Im}(\eta))=\{z \in \ker\Lambda \mid \Tr_{2}^{q^2}(z)=0\}$ and $\eta$ is 2-to-1 onto $\operatorname{Im}(\eta)$,
\[
  |\ker\phi|=2\,\big|\ker\Lambda\cap \operatorname{Im}(\eta)\big|,
\]
where $\ker(\Lambda)= \{z \in \F_{q^2} \mid (\alpha z)^q = \alpha z\}=\alpha^{-1} \F_q.$

For $\lambda\in\F_q$ we have
$\Tr_2^{q^2}(\alpha^{-1}\lambda)=\Tr_2^{q}\!\big(\lambda\,\Trq(\alpha^{-1})\big)$,
and $\Trq(\alpha^{-1})\neq 0$ since $\alpha\notin\F_q$. Hence the
restriction of $\Tr_2^{q^2}$ to $\ker\Lambda$ is a nonzero $\F_2$-linear
functional, so its kernel $\ker\Lambda\cap\operatorname{Im}(\eta)$ has
exactly $q/2$ elements. Therefore $|\ker\phi|=q$, that is,
$\dim_{\F_2}\ker\phi=m$.
\end{proof}

We first considered the case when $\gamma \not\in \F_q$, the case 
$\gamma \in \F_q$ is treated in the proof of Theorem~\ref{Result1}.

\begin{lem}\label{Re1}
Let $q=2^m$ with $m>1$, let $c_1,c_2,c_3\in\F_{q^2}$ and let
$\gamma\in\F_{q^2}\setminus\F_q$. Put
\[
  A:=\Trq\!\bigl(c_3\gamma^3+\gamma^{q+2}\bigr).
\]
Then
\[
  f_1(x)=x+\gamma\Trq\!\bigl(c_1x+c_2x^2+c_3x^3+x^{q+2}\bigr)
\]
is a permutation polynomial of $\F_{q^2}$ if and only if $m$ is odd,
\[
  A\neq 0,\qquad
  \Trq(\gamma^2)+(c_3\gamma+\gamma^{q})^{q+1}=0,\quad
  \mbox{~and~} \quad A\bigl(1+\Trq(c_1\gamma)\bigr)+\Trq(c_2^2\gamma^4)=0 .
\]
\end{lem}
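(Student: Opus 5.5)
The plan is to use the standard criterion for polynomials of the form $x+\gamma\Trq(h(x))$. For each $a\in\F_q$, $f_1(x)=y$ forces $x=y+\gamma a$ with $a=\Trq(h(x))$. Hence $f_1$ permutes $\F_{q^2}$ if and only if, for every $y\in\F_{q^2}$, the equation
\[
a=\Trq\bigl(h(y+\gamma a)\bigr),\qquad h(x)=c_1x+c_2x^2+c_3x^3+x^{q+2},
\]
has exactly one solution $a\in\F_q$. Expanding $h(y+\gamma a)$ with $a^q=a$ makes the right-hand side a polynomial in $a$ of degree at most $3$ with coefficients in $\F_q$ depending on $y$. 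The $a^3$ coefficient is $\Trq(c_3\gamma^3+\gamma^{q+2})=A$, independent of $y$. The $a^2$ coefficient is $\Trq\bigl(c_2\gamma^2+c_3\gamma^2y+\gamma^2y^q+\gamma^{q+1}y\bigr)$. The $a$ coefficient is $\Trq\bigl(c_1\gamma+c_3\gamma y^2+\gamma y^{2q}\bigr)$ plus a term $\Trq(\gamma^q y^2)$ from $y^{q+2}$; the $a$-free part is $\Trq(h(y))$. So for each $y$ we get a cubic $P_y(a)=Aa^3+B(y)a^2+(C(y)+1)a+D(y)$ over $\F_q$, and we need $P_y$ to have exactly one root in $\F_q$ for all $y$.

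Next, show that $A\ne0$ is necessary. If $A=0$, then $P_y$ has degree at most $2$. I would use Lemma~\ref{im}, applied with $\alpha=\gamma^2$ or a similar non-$\F_q$ element after reducing $\Trq(\gamma y^2+\gamma^q y^2)$-type terms via $y\mapsto y+\delta$, to make $B(y)$ and $C(y)$ vary freely enough. This produces some $y$ for which the resulting quadratic or linear polynomial has $0$ or $2$ roots. With $A\neq0$, normalize $P_y$ by dividing by $A$ and substituting $a\mapsto a+B/A$ to kill the quadratic term. This gives a depressed cubic $a^3+s(y)a+t(y)$ with $s=(C+1)/A+B^2/A^2$. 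I then invoke Lemma~\ref{L1}: exactly one root in $\F_q$ means type $(1,2)$, or a root in the degenerate case $t=0$. When $t\neq0$ the condition is $\Tr_2^q(s^3/t^2)\neq\Tr_2^q(1)$. To get this uniformly for all $y$, the natural sufficient mechanism is $s(y)\equiv0$: then $a^3=t$ has a unique root precisely when cubing is bijective on $\F_q$, i.e.\ $\gcd(3,q-1)=1$, which forces $m$ odd.

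The key step, and the main obstacle, is proving necessity: $s(y)$ must vanish identically. My approach is to write $A^2 s(y)=A(C(y)+1)+B(y)^2$. The $y$-dependent part of this is an $\F_2$-linear form in $y$, namely
\[
\Trq\!\bigl(\beta y^2\bigr),\qquad \beta:=A(c_3\gamma+\gamma^q)+(c_3\gamma^2+\gamma^{q+1})^2\ \text{(suitably conjugated)}.
\]
The constant part is $A(1+\Trq(c_1\gamma))+\Trq(c_2^2\gamma^4)$. Setting $\beta$'s contribution to zero should reduce, after using $A=\Trq(c_3\gamma^3+\gamma^{q+2})$ and simplifying norms, to $\Trq(\gamma^2)+(c_3\gamma+\gamma^q)^{q+1}=0$. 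Setting the constant to zero gives the last condition. If the linear form is nonzero, it takes every value of $\F_q$, and Lemma~\ref{im} is what guarantees this surjectivity. I would then count: as $y$ ranges over a coset of the kernel, $s$ and $t$ vary. I would show, via a character-sum count of $y$ with $\Tr_2^q(s^3/t^2)=\Tr_2^q(1)$, that some $y$ yields three roots or no root. This count uses Kloosterman sums, which the abstract mentions, and their Weil bound, and it is where $m>1$ is needed. The same argument handles a nonzero constant term with zero linear part, by considering cubics $a^3+sa+t$ with fixed $s\neq0$ and $t$ ranging over $\F_q$. If $m$ is even, $a^3=t$ fails for non-cubes $t$, provided $t(y)$ hits a non-cube, and surjectivity of $t$ in $y$ gives this. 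Sufficiency is then immediate from $s\equiv0$ and $m$ odd.
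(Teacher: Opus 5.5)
Your sufficiency direction matches the paper's: with $A\neq0$ you depress the cubic, its linear coefficient is $(AC+B^2)/A^2$, and for $m$ odd cubing is a bijection of $\F_q$. Two of your coefficients are miscomputed. The $a^2$-coefficient contains a spurious $\gamma^{q+1}y$. In the $a$-coefficient, the terms $\Trq(\gamma y^{2q})$ and $\Trq(\gamma^q y^2)$ are equal, so they cancel. The correct coefficients of $a^2$ and $a$ in $P_y$ are $B(y)=\Trq\bigl((c_2+c_3y+y^q)\gamma^2\bigr)$ and $C(y)=1+\Trq\bigl(c_1\gamma+dy^2\bigr)$, with $d=c_3\gamma+\gamma^q$. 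Then $AC+B^2=\Trq(\beta y^2)+V$, where $V=A\bigl(1+\Trq(c_1\gamma)\bigr)+\Trq(c_2^2\gamma^4)$ and $\beta=\gamma^{2q}\bigl(d^{q+1}+\Trq(\gamma^2)\bigr)$. So your reduction to the middle condition does go through.

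\textbf{The gap is in necessity.} You propose to show, via a Kloosterman-sum count, that $s\not\equiv0$ (or $m$ even) produces some $y$ with $\Tr_2^q(s^3/t^2)=\Tr_2^q(1)$. No count is given, and the relevant sum is not a Kloosterman sum. The term $t(y)=(B(y)C(y)+AD(y))/A^2$ contains $D(y)=\Trq(h(y))$, which is cubic in $y$. You would therefore face a two-variable character sum of a high-degree rational function over $\F_q^2$. You would also have to rule out that $s^3/t^2$ has the form $g^2+g+\mathrm{const}$, and a Weil-type bound would still leave small $q$ open, while the lemma is claimed for every $m>1$. Your $m$ even case has a similar hole: with $s\equiv0$ you need some $y$ with $t(y)\neq0$. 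Since $t$ is not affine, its claimed surjectivity is not automatic.

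\textbf{The missing idea} is to count collisions rather than roots, because then the cubic part cancels. The map $x\mapsto\bigl(f_1(x),\Trq(h(x))\bigr)$ is a bijection onto the pairs $(y,a)$ with $P_y(a)=0$, so there are $q^2$ roots in total. Hence $f_1$ fails to permute if and only if some $P_y$ has two roots $a_0\neq a_0+\lambda$, and
\[
P_y(a_0+\lambda)+P_y(a_0)=\lambda\bigl(A\lambda^2+B(x)\lambda+C(x)\bigr),\qquad x=y+\gamma a_0,
\]
in which $D$ no longer appears. Necessity then reduces to linear algebra on the affine maps $B$ and $C$:
\begin{itemize}
\item If $A=0$, pick $x$ with $B(x)C(x)\neq0$ and take $\lambda=C(x)/B(x)$; the cases $B\equiv0$ and $C\equiv0$ are handled separately.
\item If $A\neq0$, the zero sets $Z_B$ and $Z_C$ both have $q$ elements. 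Any $x\in Z_C\setminus Z_B$ gives the root $\lambda=B(x)/A$. If that set is empty, then $Z_B=Z_C$ by cardinality. This forces the two linear functionals to be proportional, and a short computation gives $\beta=V=0$, i.e.\ $AC\equiv B^2$.
\item If $AC\equiv B^2$ and $m$ is even, any $x\notin Z_B$ turns the quadratic into $\mu^2+\mu+1=0$ with $\mu=A\lambda/B(x)$. This has two nonzero roots in $\F_q$ because $\Tr_2^q(1)=0$.
\end{itemize}
No character sums are needed for this lemma. The paper uses Kloosterman sums only in Lemmas~\ref{L2} and~\ref{L3}, for the second family.
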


\begin{proof}
To show $f_1(x)$ is a permutation polynomial over $\F_{q^2}$, it is sufficient to
prove that for any $\alpha\in\F_{q^2}$, the equation
$$x+\gamma \Tr_{q}^{q^2} (c_1 x+ c_2 x^2+c_3 x^3+x^{q+2})=\alpha$$
has a unique solution in $\F_{q^2}$. Substitute $x=\alpha+\gamma u$, $u\in\F_q$,
in $f_1(x)=\alpha$; we get the following cubic in the variable $u$
\begin{equation}\label{e1}
  A\,u^3+B(\alpha)\,u^2+C(\alpha)\,u+D(\alpha)=0,
\end{equation}
where, for $y\in\F_{q^2}$,
\begin{equation}\label{ABCD}
\begin{split}
  &A=\Trq\!\bigl(c_3\gamma^3+\gamma^{q+2}\bigr),\qquad
   B(y)=\Trq\!\bigl((c_2+c_3y+y^{q})\gamma^{2}\bigr),\\[2pt]
  &C(y)=1+\Trq\!\bigl(c_1\gamma+(c_3\gamma+\gamma^{q})y^{2}\bigr),\qquad
   D(y)=\Trq\!\bigl(c_1y+c_2y^2+c_3y^3+y^{q+2}\bigr).
\end{split}
\end{equation}
Clearly $f_1(x)=\alpha$ has a unique solution $x$ in $\F_{q^2}$ for every
$\alpha\in\F_{q^2}$ if and only if Equation~\eqref{e1} has a unique solution $u$
in $\F_q$ for every $\alpha\in\F_{q^2}$.

Dividing~\eqref{e1} by $A$ and substituting $u=z+B(\alpha)/A$ removes the
quadratic term and yields
\[
  z^3+\left(\frac{B(\alpha)^2}{A^2}+\frac{C(\alpha)}{A}\right)z
  +\frac{B(\alpha)C(\alpha)}{A^2}+\frac{D(\alpha)}{A}=0 .
\]
For every $\alpha\in\F_{q^2}$, the coefficient of $z$ vanishes, since
\[
  B(\alpha)^{2}+A\,C(\alpha)=\Trq\!\big(U\alpha^{2q}\big)+V,
\]
where
\[
  U=\gamma^{2}\Big(\Trq(\gamma^{2})+(c_3\gamma+\gamma^{q})^{q+1}\Big),
  \qquad
  V=A\big(1+\Trq(c_1\gamma)\big)+\Trq(c_2^{2}\gamma^{4}),
\]
and the hypotheses give $U=V=0$. Therefore, the cubic reduces to
\[
  z^3=\frac{B(\alpha)C(\alpha)+D(\alpha)A}{A^2}.
\]
Since $m$ is odd we have $\gcd(3,q-1)=1$, so $z\mapsto z^3$ permutes $\F_q$ and
this equation has exactly one solution $z\in\F_q$. Hence~\eqref{e1} has exactly
one solution $u\in\F_q$ for every $\alpha\in\F_{q^2}$, and therefore $f_1$
permutes $\F_{q^2}$.

Conversely, for $(x,u)\in\F_{q^2}\times\F_{q^2}^{*}$,
\[
  f_1(x+u)+f_1(x)=0
  \iff
  f_1(u)+\gamma\Trq\!\big(c_3x^2u+c_3xu^2+u^qx^2+u^2x^q\big)=0.
\]
Raising this equation to the $q$-th power and adding it to the original
yields $\gamma u^q=\gamma^q u$, so $u=\lambda\gamma$ for some
$\lambda\in\F_q^{*}$. Substituting back gives
$A\lambda^3+B(x)\lambda^2+C(x)\lambda=0$, and dividing by $\lambda\neq0$ we
obtain the quadratic
\begin{equation}\label{case61}
  A\lambda^2+B(x)\,\lambda+C(x)=0,
\end{equation}
with $A$, $B$, $C$ as in~\eqref{ABCD}.

First suppose $A=0$. For $S\in\F_{q^2}[x]$ write
$Z_S=\{x\in\F_{q^2}:S(x)=0\}$. Since
\[
  B(x)=0\iff\Trq\!\big((c_3\gamma^2+\gamma^{2q})x\big)=\Trq(c_2\gamma^2),
\]
\[
  C(x)=0\iff\Trq\!\big((c_3\gamma+\gamma^{q})x^{2}\big)=1+\Trq(c_1\gamma),
\]
and $x\mapsto x^2$ is a bijection of $\F_{q^2}$, each of $B$ and $C$, unless
identically zero, vanishes on exactly $q$ elements of $\F_{q^2}$. Hence, if
neither vanishes identically, then as $m>1$ we have $q\geq4$ and there are at
least $q(q-2)$ elements $x\in\F_{q^2}$ with $B(x)C(x)\neq0$; for any such $x$,
$\lambda=C(x)/B(x)\in\F_q^{*}$ satisfies~\eqref{case61}, and $f_1$ is not a
permutation of $\F_{q^2}$.

Now $B\equiv0$ if and only if $c_3\gamma^2+\gamma^{2q}=0$ and
$\Trq(c_2\gamma^2)=0$; note that $c_3\gamma^2+\gamma^{2q}=0$ already
forces $A=0$. In this case \eqref{case61} reduces to
\[
  \Trq\!\big(\gamma^{q-1}(\gamma+\gamma^q)\,x^2\big)=1+\Trq(c_1\gamma).
\]
As $x\mapsto x^2$ is a bijection of $\F_{q^2}$ and
$z\mapsto\Trq(\beta z)$ is onto $\F_q$ for
$\beta=\gamma^{q-1}(\gamma+\gamma^q)\neq 0$ (recall
$\gamma\notin\F_q$), this equation has a solution $x\in\F_{q^2}$, which for all
$\lambda\in\F_{q}^*$ gives us that $f_1(x+\lambda\gamma)=f_1(x)$. If instead
$C\equiv 0$, then $c_3=\gamma^{q-1}$, which again forces $A=0$, and
\eqref{case61} becomes $B(x)=0$, i.e.\
$\Trq\!\big((c_3\gamma^2+\gamma^{2q})x\big)=\Trq(c_2\gamma^2)$; since
$c_3\gamma^2+\gamma^{2q}=\gamma^{q}(\gamma+\gamma^{q})\neq0$, the same
surjectivity argument applies. Finally, $B$ and $C$ cannot both vanish
identically: $B\equiv 0$ forces $c_3=\gamma^{2(q-1)}$ while
$C\equiv 0$ forces $c_3=\gamma^{q-1}$, whence $\gamma^{q-1}=1$,
contradicting $\gamma\notin\F_q$.

Now let $A\neq 0$. Since $A=\Trq\!\big(\gamma^2(c_3\gamma+\gamma^q)\big)$, we
have $c_3\gamma+\gamma^q\neq0$, and also $c_3\gamma^2+\gamma^{2q}\neq0$ (its
vanishing forces $A=0$, as shown above). By the descriptions of $Z_B$ and $Z_C$
above, the left-hand sides are surjective maps onto $\F_q$ with fibres of size
$q$, so $|Z_B|=|Z_C|=q$. Recall that,
\[
  AC(x)+B(x)^2=\Trq\!\big(Ux^{2q}\big)+V,
  \qquad\text{where}
\]
\[
  U=\gamma^2\big(\Trq(\gamma^2)+(c_3\gamma+\gamma^q)^{q+1}\big),
  \qquad
  V=A\big(1+\Trq(c_1\gamma)\big)+\Trq(c_2^2\gamma^4)\in\F_q.
\]

Claim: $Z_B=Z_C$ if and only if $U=V=0$, if and only if
$AC\equiv B^2$.

Suppose first $U=V=0$. The displayed identity gives
$AC(x)=B(x)^2$ for all $x$, and since $A\neq0$,
$C(x)=0\iff B(x)=0$, i.e.\ $Z_B=Z_C$. Conversely, if
$AC\equiv B^2$, then $\Trq\!\big(Ux^{2q}\big)+V=0$ for all
$x\in\F_{q^2}$; taking $x=0$ gives $V=0$, and then, as
$x\mapsto x^{2q}$ is a bijection and the trace form is nondegenerate,
$U=0$.

It remains to show that $Z_B=Z_C$ implies $U=V=0$. Substituting
$z=x^2$ (a bijection of $\F_{q^2}$) and squaring, resp.\ multiplying
by $A$, we have
\[
  x\in Z_B\iff\Trq\!\big(b^2z\big)=\Trq(c_2^2\gamma^4),
  \qquad
  x\in Z_C\iff\Trq\!\big(Ad z\big)=A\big(1+\Trq(c_1\gamma)\big),
\]
with $b^2=(c_3\gamma^2+\gamma^{2q})^2\neq0$ and
$Ad=A(c_3\gamma+\gamma^q)\neq0$. Thus $Z_B=Z_C$ says that the two
surjective $\F_q$-linear functionals $z\mapsto\Trq(b^2z)$ and
$z\mapsto\Trq(Ad z)$ have a common nonempty level set; the level
sets being cosets of the kernels, the kernels agree, so the
functionals are proportional: $Ad=\mu b^2$ for some
$\mu\in\F_q^*$, and the level values satisfy
$A\big(1+\Trq(c_1\gamma)\big)=\mu\,\Trq(c_2^2\gamma^4)$. Since
$A,\mu\in\F_q$, raising $Ad=\mu b^2$ to the $q$-th power
and eliminating $\mu$ gives $b^2d^q=b^{2q}d$. Now, using
$b=\gamma d+\gamma^q(\gamma+\gamma^q)$, a direct computation shows
\[
  b^2d^q+b^{2q}d
    =\big(\gamma^2d+\gamma^{2q}d^q\big)
     \big(d^{q+1}+\Trq(\gamma^2)\big)
    =A\big(d^{q+1}+\Trq(\gamma^2)\big),
\]
so $A\neq0$ forces $d^{q+1}=\Trq(\gamma^2)$, i.e.\ $U=0$, which is
equivalent to $b^2=Ad$, whence $\mu=1$. The relation between
the level values then reads
$A\big(1+\Trq(c_1\gamma)\big)=\Trq(c_2^2\gamma^4)$, i.e.\ $V=0$.
This proves the claim.

\textbf{Case 1. $m$ even.}
If $Z_C\not\subseteq Z_B$, choose $x'$ with $C(x')=0$, $B(x')\neq0$;
then \eqref{case61} becomes $A\lambda^2+B(x')\lambda=0$ and
$\lambda=B(x')/A\in\F_q^*$ is a solution. Otherwise
$Z_C\subseteq Z_B$, and $|Z_B|=|Z_C|$ forces $Z_B=Z_C$, whence
$AC+B^2\equiv0$ by the Claim. Pick any $y\notin Z_B$; then
$AC(y)/B(y)^2=1$ and, $m$ being even,
$\Tr_2^q\!\big(AC(y)/B(y)^2\big)=\Tr_2^q(1)=0$, so \eqref{case61} has
two solutions $\lambda\in\F_q$, both nonzero since
$C(y)=B(y)^2/A\neq0$. In either case $f_1$ is not a permutation of
$\F_{q^2}$.

\textbf{Case 2. $m$ odd.}
Suppose $U\neq0$ or $V\neq0$. By the Claim $Z_B\neq Z_C$, and since
$|Z_B|=|Z_C|$, both $Z_C\setminus Z_B$ and $Z_B\setminus Z_C$ are
nonempty. For $x'\in Z_C\setminus Z_B$ the value
$\lambda=B(x')/A\in\F_q^*$ solves \eqref{case61}
(alternatively, for $y\in Z_B\setminus Z_C$, take
$\lambda=(C(y)/A)^{1/2}$), so $f_1$ is not a permutation.
\end{proof}

\begin{thm}\label{Result1}
Let $q=2^m$ with $m>1$ and let $c_1,c_2,c_3,\gamma\in\F_{q^2}$. Then
\[
  f_1(x)=x+\gamma\Trq\!\bigl(c_1x+c_2x^2+c_3x^3+x^{q+2}\bigr)
\]
is a permutation polynomial of $\F_{q^2}$ if and only if one of the following
holds:
\begin{enumerate}
  \item $\gamma=0$;
  \item $\gamma\in\F_q^{*}$, $c_3=1$, $\Trq(c_2)\neq0$ and $\Trq(c_1)=\gamma^{-1}$;
  \item $\gamma\in\F_q^{*}$, $c_3=1$, $\Trq(c_2)=0$ and $\Trq(c_1)\neq\gamma^{-1}$;
  \item $\gamma\in\F_{q^2}\setminus\F_q$, $m$ is odd, and, with
        $A:=\Trq(c_3\gamma^3+\gamma^{q+2})$,
        \[
          A\neq0,\qquad
          \Trq(\gamma^2)+(c_3\gamma+\gamma^{q})^{q+1}=0,\qquad
          A\bigl(1+\Trq(c_1\gamma)\bigr)+\Trq(c_2^2\gamma^4)=0 .
        \]
\end{enumerate}
\end{thm}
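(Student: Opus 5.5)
The plan is to split according to where $\gamma$ lies and to use, in every case, the collision criterion: $f_1$ permutes $\F_{q^2}$ if and only if $f_1(x+u)=f_1(x)$ has no solution $(x,u)\in\F_{q^2}\times\F_{q^2}^*$. If $\gamma=0$, then $f_1(x)=x$, which gives (1). If $\gamma\in\F_{q^2}\setminus\F_q$, Lemma~\ref{Re1} says exactly that $f_1$ permutes $\F_{q^2}$ if and only if the conditions in (4) hold. So the only new work is the case $\gamma\in\F_q^*$, where I must show that $f_1$ permutes $\F_{q^2}$ if and only if (2) or (3) holds.

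Let $\gamma\in\F_q^*$ and suppose $f_1(x+u)=f_1(x)$. Then $u=\gamma\,\Trq\bigl(h(x+u)+h(x)\bigr)\in\F_q$, so $u=\lambda\in\F_q^*$. For $\lambda\in\F_q$ I expand the difference term by term, using $(x+\lambda)^{q+2}=(x^q+\lambda)(x^2+\lambda^2)$ and $\Trq(\lambda^k)=0$:
\[
\Trq\bigl(h(x+\lambda)+h(x)\bigr)=\lambda\Trq(c_1)+\lambda^2\Trq(c_2)+\lambda^3\Trq(c_3)+\lambda\Trq(wx^2)+\lambda^2\Trq(wx),
\]
where $w=1+c_3$. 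Dividing the collision equation by $\lambda\neq 0$ gives
\[
\gamma^{-1}+\Trq(c_1)=\lambda\Trq(c_2)+\lambda^2\Trq(c_3)+\Trq(wx^2)+\lambda\Trq(wx).
\]
Hence $f_1$ fails to be a permutation exactly when this equation has a solution with $x\in\F_{q^2}$ and $\lambda\in\F_q^*$.

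\textbf{Case $c_3=1$.} Here $w=0$ and $\Trq(c_3)=0$, so $x$ disappears and the condition becomes $\gamma^{-1}+\Trq(c_1)=\lambda\Trq(c_2)$. This has a solution $\lambda\in\F_q^*$ precisely when either $\Trq(c_2)\neq0$ and $\Trq(c_1)\neq\gamma^{-1}$, or $\Trq(c_2)=0$ and $\Trq(c_1)=\gamma^{-1}$. Taking the complement gives exactly (2) and (3).

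\textbf{Case $c_3\neq1$.} Here I have to show a collision always exists, and I expect this to be the main obstacle.
\begin{itemize}
\item If $w\notin\F_q$, I take $\lambda=1$. The right-hand side is then $\Trq(c_2)+\Trq(c_3)+\Trq\bigl(w(x^2+x)\bigr)$. By Lemma~\ref{im} the map $x\mapsto\Trq\bigl(w(x^2+x)\bigr)$ is onto $\F_q$, so some $x$ works.
\item If $w\in\F_q^*$, then $c_3\in\F_q$, so $\Trq(c_3)=0$. Writing $t=\Trq(x)$, which ranges over all of $\F_q$, the equation becomes
\[
\gamma^{-1}+\Trq(c_1)+wt^2=\lambda\bigl(\Trq(c_2)+wt\bigr).
\]
This is linear in $\lambda$. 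I will pick $t$ with $\Trq(c_2)+wt\neq0$ and also $\gamma^{-1}+\Trq(c_1)+wt^2\neq0$, so that the solution $\lambda$ is nonzero. Each condition excludes at most one value of $t$: the map $t\mapsto wt^2$ is injective in characteristic $2$. Since $q\geq4$, such a $t$ exists.
\end{itemize}
So $f_1$ is never a permutation when $c_3\ne 1$. Combining this with the case $c_3=1$ shows that, for $\gamma\in\F_q^*$, $f_1$ permutes $\F_{q^2}$ if and only if (2) or (3) holds, which completes the proof.
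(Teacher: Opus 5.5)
Your proposal is correct and follows essentially the same proof as the paper. You use Lemma~\ref{Re1} for $\gamma\notin\F_q$, and for $\gamma\in\F_q^*$ you use the same collision identity with the same three-way split: $c_3=1$; $c_3\in\F_q\setminus\{1\}$, handled through $t=\Trq(x)$ with the same two excluded values; and $c_3\notin\F_q$, handled with $\lambda=1$ and Lemma~\ref{im}. The only small difference is that you derive sufficiency of (2) and (3) from the same collision equivalence, whereas the paper solves Equation~\eqref{e2} directly; this is a slight streamlining rather than a different route.
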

\begin{proof}
From Lemma~\ref{Re1}, we know that for $\gamma\in\F_{q^2}\setminus\F_q$ the
polynomial $f_1$ permutes $\F_{q^2}$ if and only if the conditions in item~(4)
hold. Hence we may assume $\gamma\in\F_q^{*}$ for the remainder of the proof. Similar to the Lemma~\ref{Re1}, to show $f_1$ is permutation polynomial, we need to show that the Equation~\eqref{e1} has a unique solution $u$ in $\F_q$ for every $\alpha \in \F_{q^2}$ (the derivation of Equation~\eqref{e1} did not use $\gamma \not\in \F_q$). Since $\gamma \in \F_q^*$ and $c_3=1$, Equation~\eqref{e1} reduces to the following equation
\begin{equation}\label{e2}
\begin{split}
& \Trq(c_2 ) \gamma^2u^2+\Tr_{q}^{q^2}(c_1 \alpha+ c_2 \alpha^2+ \alpha^3+\alpha^{q+2})+(1+ \gamma\Tr(c_1))u=0 \\
\end{split}
\end{equation}
Consider $c_2 \in \F_{q^2} \setminus \F_q$ and $\Trq(c_1)=\gamma^{-1}$, then Equation~\eqref{e2} has a unique solution $u \in \F_q$ satisfying $\Trq(c_2) \gamma^2u^2+\Tr_{q}^{q^2}(c_1 \alpha+ c_2 \alpha^2+c_3 \alpha^3+\alpha^{q+2})=0.$ When $c_2 \in \F_q$ and $\Trq(c_1)\neq\gamma^{-1}$, it is straightforward from Equation~\eqref{e2} to see that $f_1(x)= \alpha$ has a unique solution for every $\alpha \in \F_{q^2}$ and hence a permutation polynomial over $\F_{q^2}$.

To prove the converse, we show that there exist at least one pair $(x,u) \in \F_{q^2} \times \F_{q^2}^*$ such that
$f_1(x)+f_1(x+u)=0$, which implies that $f_1(x)$ is not a permutation over $\F_{q^2}$. When $\gamma \in \F_q^*$, $f_1(x+u)=f_1(x)$ implies that $u \in \F_q^*$, and therefore for  $(x,u) \in \F_{q^2} \times \F_{q}^*$,
\begin{equation}\label{np}
\frac{f_1(x+u) + f_1(x)}{u} \;=\; 1 + \gamma \Trq(c_1)+ \gamma \Trq(c_2)u+ \gamma \Trq(c_3)u^2+\gamma\,\Trq\!\bigl((c_3+1)(xu + x^2)\bigr).
\end{equation}

We split our analysis for $\gamma \in \F_q^*$ as follows.

\textbf{Case 1.} Let $c_3=1$. From Equation~\eqref{np}, $f_1(x+u)=f_1(x)$ for some $u \in \F_q^{*}$ if and only if $1 + \gamma \Trq(c_1)+ \gamma \Trq(c_2)u=0$.

\textbf{Subcase 1.} Suppose that $c_2 \in \F_q$ and $\Trq(c_1)=\gamma^{-1}$. Then, 
$f_1(x+u)=f_1(x)$ holds for all $x \in \F_{q^2}$ and $u \in \F_q^*$. 

\textbf{Subcase 2.} Let $c_2 \not \in \F_q$ and $\Trq(c_1)\neq\gamma^{-1}$. Then, $f_1(x+u)=f_1(x)$ for all $x \in \F_{q^2}$ and $u =\dfrac{1+\gamma \Trq(c_1)}{\gamma \Trq(c_2)}$.

\textbf{Case 2.} Let $ c_3 \neq 1$ and $\Trq(c_3)=0$. From Equation~\eqref{np}, we have
$$f_1(x+u)+f_1(x)=0 \iff 1+\gamma \Trq(c_1)+\gamma \Trq(c_2)u+ \gamma (c_3+1)\Trq(x^2)+\gamma (c_3+1)u\Trq(x)=0$$
for some $(x,u) \in \F_{q^2}\times\F_q^*$. Choose $x \in \F_{q^2}$ satisfying $\Trq(x) \neq \frac{\Trq(c_2)}{c_3+1}$ and $\Trq(x)^2 \neq \frac{1+\gamma \Trq(c_1)}{\gamma(c_3+1)}$, then we always obtain a unique $u \in \F_q^*$
$$u=\frac{1+\gamma \Trq(c_1)+ \gamma (c_3+1)\Trq(x^2)}{\gamma\Trq(c_2)+\gamma(c_3+1)\Trq(x)}.$$
We can always choose such $x$, since $\Trq$ is a surjective map on $\F_q$ and $q\geq 4$.

\textbf{Case 3.} Let $ c_3 \neq 1$ and $\Trq(c_3) \neq 0$. Substituting $u=1$ in Equation~\eqref{np}, we have
$$f_1(x+1)+f_1(x)=0 \iff 1+\gamma \Trq(c_1)+\gamma \Trq(c_2)+\gamma\Trq(c_3)+ \gamma \Trq((c_3+1)(x^2+x))=0$$
for some $x \in \F_{q^2}$.  Using Lemma~\ref{im}, the map $\phi: \F_{q^2} \mapsto \F_q$ defined as $\phi(x)=\Trq((c_3+1)(x^2+x))$ is surjective on $\F_q$ since $1+c_3 \not \in \F_q$. Thus, there will always exist an element $x \in \F_{q^2}$, which together with $u=1$ will give $f_1(x+u)=f_1(x)$.                                                                                                                                                                                                                                                                                                                                                                                                                                                                                                                                                                                                                                                                                                                                                                                                                                                                                                                                                                                                                                                                                                                                                                                                                                                                                                                                                                                                                                                                                                                                                                                                                                                                                                                                                                                                                                                                                                                                                                                                                                                                                                                                                                                                                                                                                                                                                                                                                                                                                                                                                                                                                                                                                                                                                                                                                                                                                                                                                                                                                                                                                                                                                                                                                                                                                                                                                                                                                                                                                                                           
\end{proof}

\begin{rmk}
Theorems 3.1, 3.3, 3.5, 3.6, 3.8 and 3.9 of \cite{Jiang} are special cases of the aforementioned theorem when $c_1,c_2,c_3$ are considered to be in $\F_2$. Moreover, since $(x+ x^q)x^2=x^3+x^{q+2}$,
the same holds for Theorems 5, 6, 7 and 8 of \cite{LCLL} in the case $k=1$, which
correspond to $(c_1,c_2,c_3)=(0,0,1)$, $(1,0,1)$, $(0,1,1)$ and $(1,1,1)$,
respectively. Finally, in a recent independent work \cite{HKK}, Theorem 4.3 treats
the case $n=2$; since $x^2\Tr_q^{q^2}(x)=x^3+x^{q+2}$, this result can be recovered from our Theorem~\ref{Result1} by taking $c_3=1$ and $c_1,c_2\in\F_q$.
\end{rmk}

\begin{exmp}
Let $q=2^3$ and let $a$ be a root of the irreducible polynomial
$a^6+a^4+a^3+a+1\in\F_2[x]$, so that $\F_{q^2}^*=\langle a\rangle$. Then
\[
  f_1(x)=x+(a^5 + a^4 + a^2 + a)\Trq\!\big(ax+(a^5 + a^4 + a^2 + a + 1)x^2+(a^5 + a^4 + a^3 + 1)x^3+x^{q+2}\big)
\]
is a permutation polynomial of $\F_{q^2}$ by
Theorem~\ref{Result1}, applied with $\gamma=a^5 + a^4 + a^2 + a$, $c_1=a$,
$c_2=a^5 + a^4 + a^2 + a + 1$, $c_3=a^5 + a^4 + a^3 + 1$. 
\end{exmp}

In the following two lemmas, we establish surjectivity results for maps of the form $z \mapsto \Trq\left(A\frac{z^{2q}+z}{z^2+z}\right)$, depending on $A$ is in $\F_q$ or not. 
\begin{lem}\label{L2}
Let $q=2^m$ and define $H:\F_{q^2}\setminus\F_2\to\F_q$ by
\[
  H(z)=\Trq\!\left(\frac{z^{2q}+z}{z^2+z}\right).
\]
Then $\mathrm{Im}(H)=\F_q\setminus\{1\}$.
\end{lem}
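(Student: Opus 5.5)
The plan is to reduce $H$ to a rational function of the two $\F_q$-coordinates of $z$, namely its trace and norm, and then study which values that function attains. I split the domain into $z\in\F_q\setminus\F_2$ and $z\in\F_{q^2}\setminus\F_q$.

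\textbf{The case $z\in\F_q\setminus\F_2$, and the reduction.} If $z\in\F_q\setminus\F_2$, then $z^{2q}=z^2$, so the fraction equals $1$ and $H(z)=\Trq(1)=0$; this needs $q\ge 4$. Now let $z\notin\F_q$ and put $y=z^q$, $s=z+y=\Trq(z)\in\F_q^*$ and $p=zy\in\F_q^*$. Writing $H(z)=r+r^q$ with $r=(y^2+z)/(z^2+z)$, I clear denominators and use the characteristic-$2$ identities $z^4+y^4=s^4$, $z^3+y^3=s^3+ps$ and $zy^2+yz^2=ps$. This should give
\[
  H(z)=\frac{s^4+s^3}{p^2+ps+p}=\frac{s^3(s+1)}{p\,(p+s+1)} .
\]
The admissible pairs $(s,p)$ are exactly those for which $X^2+sX+p$ is irreducible over $\F_q$, that is, $s\neq0$ and $\Tr_2^q(p/s^2)=1$. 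Taking $s=1$ gives the value $0$ again, and every admissible pair has $p\neq 0$ and $p\ne s+1$.

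\textbf{Normalisation and the value $1$.} Set $p=s^2v$ with $\Tr_2^q(v)=1$. The equation $H(z)=t$ becomes
\[
  t\,s\,v^2+t\,(s+1)\,v+(s+1)=0 .
\]
To show that $1\notin \mathrm{Im}(H)$, I take $t=1$, which gives $s v^2+(s+1)v+(s+1)=0$, and show that no root $v$ has absolute trace $1$. Regrouping gives $s(v^2+v+1)=v+1$. Hence $s=(v+1)/(v^2+v+1)$, and this lies in $\F_q$ precisely when $v\in\F_q$ (we need $v^2+v+1\neq0$, which holds for $\Tr_2^q(v)=1$ when $m$ is odd and needs a separate check when $m$ is even). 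So the question becomes whether $s=(v+1)/(v^2+v+1)$ ever has $s\neq0,1$ together with $\Tr_2^q(v)=1$. I do not yet see the contradiction here. I expect this step to need an argument that goes back to $z$ itself, for example checking directly that $z^{2q}+z=(z^2+z)(1+\delta)$ with $\delta+\delta^q=1$ is impossible for $z\notin\F_q$. I flag this as a point to verify carefully.

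\textbf{Attaining every $t\neq 0,1$.} For fixed $t\ne0,1$ I need some $s\in\F_q^*$ for which the quadratic in $v$ above has a root with $\Tr_2^q(v)=1$. I count the number $N_t$ of pairs $(s,v)\in\F_q^*\times\F_q$ satisfying the equation with $\Tr_2^q(v)=1$. To do this I write the indicator of $\Tr_2^q(v)=1$ as $\tfrac12\bigl(1-(-1)^{\Tr_2^q(v)}\bigr)$ and parametrise the solutions by $v$. Solving the equation for $s$, which is linear in $s$, gives $s=(tv+1)/(tv^2+tv+1)$. Then $N_t$ becomes $\tfrac12\sum_v\bigl(1-(-1)^{\Tr_2^q(v)}\bigr)$ taken over the $v$ for which this $s$ is defined and nonzero. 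This is $q/2$ minus a bounded error, so it is positive. If instead the computation forces a two-variable character sum, I will substitute to reduce it to a Kloosterman sum $K(\chi;a,b)$ and use Weil's bound $|K|\le2\sqrt q$; that gives positivity for $m\ge3$, and small $m$ is checked directly.

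\textbf{Main obstacle.} The hardest part will be the precise exclusion of the value $1$ and the bookkeeping of degenerate $v$ (those with $s\in\{0,1\}$ or a vanishing denominator). I will handle these by listing them explicitly, since each excludes at most two values of $v$.
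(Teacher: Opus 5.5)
There is a genuine gap. It comes from an algebra slip in your normalisation, not from a missing idea. Your formula $H(z)=s^3(s+1)/\bigl(p(p+s+1)\bigr)$ and the admissibility condition $\Tr_2^q(p/s^2)=1$ are correct; this is the paper's formula, with its $t$ being your $p$. The normalised equation, however, is wrong. Substituting $p=s^2v$ into $s^3(s+1)=t\,p\,(p+s+1)$ and dividing by $s^2$ gives
\[
  t\,s^2v^2+t\,(s+1)\,v+s(s+1)=0,
\]
not $t s v^2+t(s+1)v+(s+1)=0$. For instance, take $q=4$ with $\F_4=\{0,1,\omega,\omega^2\}$ and $s=v=\omega$, so $p=1$ and $H(z)=\omega$; your equation then evaluates to $\omega^2\neq0$.

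This is why you cannot find the contradiction at $t=1$: your equation has none. For $q=8$, any $v\neq1$ with $\Tr_2^q(v)=1$ gives $s=(v+1)/(v^2+v+1)\notin\{0,1\}$, so your set-up would put $1$ in the image. With the correct equation the exclusion is immediate, and you do not need to go back to $z$. Divide by $t$ and set $w=sv$ to get $w^2+w+v+(s^2+s)/t=0$. Taking $\Tr_2^q$ gives $\Tr_2^q(v)=\Tr_2^q\bigl((s^2+s)/t\bigr)$. This is $0$ when $t=1$, contradicting $\Tr_2^q(v)=1$. This is exactly the paper's step: it divides the quadratic in the norm by $s^2$ and takes the absolute trace.

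The slip also breaks your surjectivity plan. For fixed $v$ the correct equation reads $s^2(tv^2+1)+s(tv+1)+tv=0$, which is quadratic in $s$, not linear. So there is no parametrisation $s=s(v)$, and the count is not $q/2$ minus a bounded error. You have to impose two trace conditions at once. In the paper's formulation, you need $s\in\F_q\setminus\{0,1\}$ with:
\begin{itemize}
\item $\Tr_2^q\bigl(s^3/(t(s+1))\bigr)=0$, so that the quadratic in $v$ splits over $\F_q$;
\item $\Tr_2^q\bigl((s^2+s)/t\bigr)=1$, so that its roots have trace $1$, by the identity above.
\end{itemize}
Substituting $y=1/(s+1)$ and rescaling turns this into $\Tr_2^q(z)=e$ and $\Tr_2^q(\delta/z)=1$, with $\delta\neq0$ because $t\neq1$. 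The number of solutions is $\frac14\bigl(q-(-1)^e(1+K(\delta))\bigr)$, where $K(\delta)=\sum_{z\in\F_q^*}(-1)^{\Tr_2^q(z+\delta/z)}$ is a genuine Kloosterman sum, whose error term is of order $\sqrt q$ rather than bounded. Weil's bound then gives positivity except for the smallest $q$, which have to be checked directly.

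Your fallback sentence points in this direction. As written, though, the proposal proves neither $1\notin\mathrm{Im}(H)$ nor that every $t\neq0,1$ is attained.
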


\begin{proof}
 We claim that $\mathrm{Im}(H)= \F_q \setminus \{1\}$ or equivalently, for each $\alpha \in \F_q \setminus \{1\}$ there exists $z \in \F_{q^2} \setminus \{0,1\}$ such that $H(z)=\alpha$ and $H(z) \neq 1$ for all $z$. A direct computation gives $H(z)=\dfrac{s^3(s+1)}{t(t+s+1)}$, where $s=z^q+z$ and $t=z^{q+1}$. Notice that $H(z)=0$ if and only if $s \in \{0,1\}$. 

Now, suppose that $\alpha \in \F_q^{*}$ and thus $s \in \F_q \setminus \in \{0,1\}$. Hence, $H(z)=\alpha$ has a solution if and only if 
\begin{equation}\label{leq1}
t^2+(s+1)t+\frac{s^3+s^4}{\alpha}=0.
\end{equation}
Viewing the above equation as a quadratic in $t$, there exists a solution $t \in \F_q^*$ if and only if $\Tr_2^q\left(\frac{s^3}{\alpha(s+1)}\right)=0$ for some $s \in \F_{q}\setminus \{0,1\}$.

Let there exist $(s,t) \in \F_q \setminus \{0,1\} \times \F_q^*$ satisfying Equation~\eqref{leq1}, then we need to go back and show the existence of $z \in \F_{q^2}\setminus \F_q$ satisfying $z^q+z \neq 1, z^q+z=s$ and $z^{q+1}=t$ for these pairs of $(s,t)\in \F_q \setminus \{0,1\} \times \F_q^*$. Let $x\in\F_{q^2}$ to be any root of $X^2+sX+t=0$. Then raising the quadratic equation to q power and adding it to $X^2+sX+t=0$, we get either $x \in \F_q$ or $x+x^q=s$ and $x^{q+1}=t$. 
Therefore, if we show that $x \not \in \F_q$ and $x+x^q \neq 1$, we get a $x \in \F_{q^2} \setminus \F_q$ such that $H(x)=\alpha$, for $\alpha \in \F_q^{*}$. Showing $x \not \in \F_q$, is equivalent to saying $\Tr_2^q\left(\frac{t}{s^2}\right)=1$. Moreover, if ``$x$'' in $\F_{q^2} \setminus \F_q$ satisfying $x+x^q=1$ is solution of $X^2+sX+t=0$, then
$$0=(x^2+sx+t)+(x^2+sx+t)^q=(x+x^q)^2+s(x+x^q)=1+s,$$
which is not possible since $s \not \in \{ 0,1\}$ (since $\alpha \neq 0$).

Hence, if we show there exists $s \in \F_{q} \setminus \{0,1\}$ and $t \in \F_q^*$ such that $\Tr_2^q\left(\frac{s^3}{\alpha(s+1)}\right)=0$ and $\Tr_2^q\left(\frac{t}{s^2}\right)=1$, we get the desired claim. Divide Equation~\eqref{leq1} by $s^2$, we get 
\begin{equation*}
\left(\frac{t}{s}\right)^2+\left(\frac{t}{s^2}\right)+\left(\frac{t}{s}\right)+\frac{s^2+s}{\alpha}=0.
\end{equation*}
Applying trace on the above equation, we get $\Tr_{2}^q\left(\frac{t}{s^2}\right)=\Tr_2^q\left(\frac{s^2+s}{\alpha}\right)$. Thus, if we show there exists $s \in \F_{q} \setminus \{0,1\}$ such that $\Tr_2^q\left(\frac{s^3}{\alpha(s+1)}\right)=0$ and $\Tr_2^q\left(\frac{s^2+s}{\alpha}\right)=1$, we get the desired claim. 

If $\alpha = 1$, then $\Tr_{2}^q\left(\frac{t}{s^2}\right)=\Tr_2^q\left(s^2+s\right)=0$. Thus $\alpha$ can never be equal to one. Now, put $\beta=\alpha^{-1}\in \F_q^{*}$. We want $s\in \F_q\setminus\{0,1\}$ such that
\[
\Tr_2^q\!\left(\frac{\beta s^3}{s+1}\right)=0
\quad \text{and} \quad
\Tr_2^q\!\left(\beta(s^2+s)\right)=1.
\]

Set
\[
y=\frac{1}{s+1}\in \F_q^{*},
\qquad\text{so that}\qquad
s=\frac{1}{y}+1.
\]
 Since the map $s\mapsto y=(s+1)^{-1}$ is a bijection between 
$\F_q\setminus\{0,1\}$ and $\F_q\setminus\{0,1\}$, it suffices to find $y\in \F_q^{*}\setminus \{1\}$ satisfying the corresponding conditions.

First,
\[
s^2+s
=
\left(\frac1y+1\right)^2+\left(\frac1y+1\right)
=
\frac1{y^2}+\frac1y.
\]
Hence
\[
\Tr_2^q\!\left(\beta(s^2+s)\right)
=
\Tr_2^q\!\left(\frac{\beta}{y^2}+\frac{\beta}{y}\right).
\] 
Writing $\sqrt{\beta}=\beta^{2^{m-1}}$, we obtain
\[
\Tr_2^q\!\left(\beta(s^2+s)\right)
=
\Tr_2^q\!\left(\frac{\sqrt{\beta}+\beta}{y}\right).
\]
Thus the second condition is equivalent to
\begin{equation*}
\Tr_2^q\!\left(\frac{\sqrt{\beta}+\beta}{y}\right)=1.
\end{equation*}
Next,
\[
\frac{\beta s^3}{s+1}
=
\beta\left(\frac1y+1\right)^3 y
=
\beta\left(\frac1{y^2}+\frac1y+1+y\right).
\]
Taking trace, we obtain
\[
\Tr_2^q\!\left(\frac{\beta s^3}{s+1}\right)
=
\Tr_2^q\!\left(\frac{\sqrt{\beta}+\beta}{y}\right)
+\Tr_2^q(\beta)
+\Tr_2^q(\beta y).
\]
This implies that $\Tr_2^q(\beta y)=1+\Tr_2^q(\beta)$ and now we have to solve the following system 

\[
\Tr_2^q(\beta y)=1+\Tr_2^q(\beta)
\quad \text{and} \quad
\Tr_2^q\!\left(\frac{\beta+\sqrt{\beta}}{y}\right)=1.
\]
Substituting $z=\beta y$ and $\delta = \beta(\beta+\sqrt{\beta}),$ we get
\begin{equation}\label{leq3}
 \Tr_2^q(z)=e
\quad \text{and} \quad
\Tr_2^q\!\left(\frac{\delta}{z}\right)=1
\end{equation}
where $e=1+\Tr_2^q(\beta)$. Since $y\not \in \{0,1\}$, therefore $z \not \in \{0,\beta\}$. Define
\[
N
=
\#\{z\in \F_q^{*} : \Tr_2^q(z)=e,\ \Tr_2^q(\delta/z)=1\}.
\]
Notice that $ \Tr_2^q(\beta)=e$ is not possible. Since the indicator function of the condition $\Tr_2^q(z)=\ell$ is
$\frac{1}{2}\big(1+(-1)^{\Tr_2^q(z)+\ell}\big)$, we have
\begin{align*}
N & =\sum_{z \in \F_q^*}\left(\left( \frac{1}{2}\sum_{a\in\mathbb{F}_2}(-1)^{a(\Tr_2^q(z)+e)}\right)\left( \frac{1}{2}\sum_{b\in\mathbb{F}_2}(-1)^{b(\Tr_2^q(\delta/z)+1)}\right)\right)\\
& =\frac{1}{4}\sum_{a,b\in\mathbb{F}_2}(-1)^{ae+b}
   \sum_{z\in\mathbb{F}_q^{*}}(-1)^{\Tr_2^q(az+b\delta/z)}. 
\end{align*}
Equivalently,
\[
N
=\frac{(q-1)+(-1)^{e+1}+1+(-1)^{e+1}K(\delta)}{4}=
\frac{q-(-1)^{e}+(-1)^{e+1}K(\delta)}{4},
\]
where
\[
K(\delta)
=
\sum_{z\in \F_q^{*}}
(-1)^{\Tr_2^q(z+\delta/z)}
\]
is the Kloosterman sum. For $\delta \neq 0$, by the Weil bound~\cite[Theorem 5.45]{LN} we have
\[
|K(\delta)| \le 2\sqrt{q}
\]
and thus $N \geq
\frac{q-(-1)^{e}-2\sqrt{q} }{4} \geq \frac{q-1-2\sqrt{q} }{4},$ which is at least $1$ for all $q \geq 16$. 

We need not worry about the case $\delta = 0$, because $\delta=0$ if and only if $\beta \in \{0,1\}$ but $\beta \in \F_q^*$ and $\beta =1$ if and only if $\alpha=1$, which we have already excluded.

For small $q \in \{2,4,8\}$, we verify the result experimentally using SageMath.

\end{proof}

\begin{lem}\label{L3}
Let $q=2^m$, $m>1$, and $A\in\F_{q^2}\setminus\F_q$. Then the map
$H:\F_{q^2}\setminus\F_q\to\F_q$ defined by
\[
  H(z)=\Trq\!\left(\frac{A(z^{2q}+z)}{z^2+z}\right)
\]
is surjective.
\end{lem}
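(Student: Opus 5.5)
The first step is to simplify $H$. Put $s=z+z^q\in\F_q$ and $u=z^2+z$. Since $z^{2q}+z=(z^q+z)^2+z^2+z$, we get
\[
H(z)=\Trq(A)+s^2\,\Trq\!\left(\frac{A}{u}\right),
\]
where $s^2$ is pulled out of the trace because $s\in\F_q$. For $z\notin\F_q$ we have $s\neq0$, and $u\neq0$. Conversely, a given $u\in\F_{q^2}$ arises as $z^2+z$ for some $z$ if and only if $\Tr_2^{q^2}(u)=0$, by \cite{LN}[Theorem 2.25] as in Lemma~\ref{im}. In that case the two preimages $z,z+1$ give the same value of $s$. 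This $s$ satisfies $s^2+s=\Trq(u)$, and $z\notin\F_q$ holds exactly when $s\neq 0$. So the problem reduces to showing that, for each target $c\in\F_q$, there is an admissible $u$ with
\[
s^2\,\Trq(A/u)=c+\Trq(A).
\]

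The value $c=\Trq(A)$ is handled separately. In characteristic $2$ we have $\Trq(x)=0$ if and only if $x\in\F_q$. So I take $u=A\lambda$ with $\lambda\in\F_q^*$, which forces $\Trq(A/u)=0$. Then $u\notin\F_q$, which forces $z\notin\F_q$. It remains to arrange $\Tr_2^{q^2}(A\lambda)=\Tr_2^q(\lambda\Trq(A))=0$. Since $\Trq(A)\neq0$, this is a nonzero linear condition on $\lambda$, so it holds for $q/2-1\ge1$ nonzero values of $\lambda$ when $m>1$. This is the same argument as in Lemma~\ref{im}.

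For $c'=c+\Trq(A)\neq0$, I fix $\theta\in\F_{q^2}$ with $\Trq(\theta)=1$. The elements $w=A/u$ with $\Trq(w)=\tau\in\F_q^*$ are then exactly $w=\tau\theta+t$ with $t\in\F_q$. The requirement becomes: find $\tau\in\F_q^*$ and $t\in\F_q$ such that $u=A/(\tau\theta+t)$ satisfies both
\[
\Trq(u)=\frac{c'}{\tau}+\sqrt{\frac{c'}{\tau}}
\]
and the condition that the root $s=\sqrt{c'/\tau}$ is the one actually realized by a $z$ with $z^2+z=u$. Note that $\Tr_2^{q^2}(u)=\Tr_2^q(\Trq u)$ vanishes automatically once $\Trq(u)$ has the form $y^2+y$. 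Writing $\Trq(u)$ explicitly gives a rational function of $(\tau,t)$ over $\F_q$, with denominator the norm $N(\tau\theta+t)$. To decide which root occurs, I will use $z=\sqrt{u}$-type expressions, or equivalently the trace identity obtained by applying $\Tr_2^{q^2}$ to $z\cdot(\cdots)$. The matching condition should turn into one further absolute-trace condition $\Tr_2^q(\cdot)=\text{const}$. As in the proof of Lemma~\ref{L2}, I will then count solutions with indicator functions $\tfrac12(1+(-1)^{\Tr_2^q(\cdot)+\ell})$.

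The main obstacle is this last step. I need to reduce the system in $(\tau,t)$ to a count whose error terms are Kloosterman-type sums, or more general Weil sums of a rational function in one variable, and then bound them by \cite{LN}[Theorem 5.45]. I would first try fixing $\tau=c'$, so that $s=1$. This uses the freedom in $t$ alone, leaving a one-variable equation $\Trq(u(t))=0$ plus a trace condition, with the main term about $q/4$ and error $O(\sqrt q)$. That gives existence for $q\ge16$, and the cases $q\in\{4,8\}$ would be checked by computer as in Lemma~\ref{L2}. If fixing $\tau$ makes the equation degenerate for some $A$, I would let $\tau$ vary as well and use a two-variable count.
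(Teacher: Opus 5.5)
Your reduction is correct and matches the paper's. The identity $H(z)=\Trq(A)+(z+z^q)^2\,\Trq\big(A/(z^2+z)\big)$, the bookkeeping of which $u$ have the form $z^2+z$, and the treatment of the value $\Trq(A)$ via $z^2+z=\lambda A$ with $\Tr_2^q(\lambda\Trq(A))=0$ are all what the paper does. The gap is the main case $c'\neq0$, which you leave at ``should turn into one further trace condition''. The concrete plan you give for it does not work.

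If $\tau=c'$, then $s=1$, so $\Trq(u)=0$, i.e.\ $u\in\F_q^*$. Then $\Trq(A/u)=\Trq(A)/u=c'$ forces $u=\Trq(A)/c'$. So there is exactly one candidate, not a family with main term $q/4$. It is admissible only when $\Tr_2^q(\Trq(A)/c')=1$, so the $q/2-1$ values $c'\neq0$ with $\Tr_2^q(\Trq(A)/c')=0$ are not reached.

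No other fixed $\tau$ helps. We have $N(\tau\theta+t)=t^2+\tau t+\tau^2\theta^{q+1}$ and $\Trq\big(A(\tau\theta+t)^q\big)=\tau\Trq(A\theta^q)+t\Trq(A)$. Hence the condition $\Trq(u)=c'/\tau+\sqrt{c'/\tau}$ is a quadratic equation in $t$, and each $\tau$ yields at most two values of $t$. The free parameter has to be $\tau$ (equivalently $s$), with $t$ obtained from that quadratic.

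What is missing is this one-parameter analysis. For $s\in\F_q^*$ and $\tau=c'/s^2$, you must convert two requirements into absolute-trace conditions on $s$: (a) the quadratic is solvable over $\F_q$, and (b) the realized value of $z+z^q$ is $s$ rather than $s+1$. Then you must bound the number of good $s$ from below.

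The paper does this in coordinates adapted to $A$. Put $B=\Trq(A)$, $\alpha=A/B$ and $d=\alpha^{q+1}$, so $\alpha^q=\alpha+1$ and $\Tr_2^q(d)=1$. Writing $z=u+r\alpha$ gives $z^2+z=x+(r^2+r)\alpha$ with $x=u^2+u+r^2d$. The equation $H(z)=\beta\neq B$ becomes $x^2+(\lambda r^2+r)x+(r^2+r)^2d=0$, where $\lambda=\beta/(\beta+B)$.

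\begin{itemize}
\item The roots lie in $\F_q$ if $\Tr_2^q\big(\sqrt d\,(r+1)/(\lambda r+1)\big)=0$. This is your condition (a).
\item Exactly one root has the form $u^2+u+r^2d$ if $\Tr_2^q\big((\sqrt\lambda+1)r\big)=1$. The reason is that the two roots differ by $\lambda r^2+r$, whose absolute trace is $\Tr_2^q\big((\sqrt\lambda+1)r\big)$. This is your condition (b).
\end{itemize}

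In the joint count, the substitution $T=\lambda r+1$ turns the mixed sum into a Kloosterman sum with nonzero parameters. This gives at least $(q-2\sqrt q-7)/4>0$ admissible $r$ for $q\ge16$. The target $\beta=0$ (your $c'=\Trq(A)$) gives $\lambda=0$, where this parametrization degenerates, so a separate linear-algebra argument is needed. The cases $q\in\{4,8\}$ are checked by computer.

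As written, your argument establishes only the value $\Trq(A)$. Carrying out the $\tau=c'$ idea would add exactly the values $\Trq(A)+c'$ with $\Tr_2^q(\Trq(A)/c')=1$.
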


\begin{proof}
On simplifying $H(z)$, we have
$$H(z):=\Trq\left(\frac{A(z^{2q}+z)}{z^2+z}\right) = \Trq(A)+(z+z^q)^2 \Trq\left(\frac{A}{z^2+z}\right).$$
Let $z^q+z=r \in \F_q$ and $B=A^q+A \in \F_{q}^*$. Write $A=B \alpha$, where $\alpha+\alpha^q=1$. Hence $\{1,\alpha\}$ is an $\F_q$-basis of $\F_{q^2}$. Put
$$d:=\alpha\,\alpha^q=\alpha+\alpha^2\in\F_q,\qquad \alpha^2=\alpha+d.$$
Also, $\Tr_{2}^{q}(d)=1$, because $X^2+X+d$ is minimal polynomial of $\alpha \in \F_{q^2} \setminus \F_q$.
 
Let $z=u+r\alpha$, where $u \in \F_q$. From $\alpha^q=1+\alpha$,
$$z^q=(u+r)+r\alpha,\qquad \Trq(z)=z+z^q=r,$$
Using $\alpha^2=\alpha+d$,
$$z^2=(u^2+r^2d)+r^2\alpha,\qquad t:=z^2+z=x+c\alpha,\quad x:=u^2+u+r^2d,\ \ c:=r^2+r.$$
A direct computation then gives
\begin{equation}\label{Hformula}
  H(z)=B+\frac{r^2Bx}{x^2+cx+c^2d}.
\end{equation}
Observe that the denominator is the norm of $t=z^2+z$, hence nonzero
precisely when $z\notin\F_2$; thus \eqref{Hformula} is valid for
every $(u,r)\in\F_q^2$ with $z=u+r\alpha\notin\{0,1\}$.

Our claim is to show that $H(z)=\beta$ has a solution $z\in \F_{q^2}\setminus\F_q$ for all $\beta \in \F_q$. First let $\beta=B$. By \eqref{Hformula} (or directly), $H(z)=B$
whenever $\Trq\!\big(A/(z^2+z)\big)=0$, i.e.\ whenever
$A/(z^2+z)\in\F_q^*$, that is,
\begin{equation}\label{ArtinSchreier}
  z^2+z=aA\qquad\text{for some } a\in\F_q^*.
\end{equation}
Equation\eqref{ArtinSchreier} has a
solution $z\in\F_{q^2}$ if and only if $\Tr_2^{q^2}(aA)=0$. By
transitivity of the trace,
\[
  \Tr_2^{q^2}(aA)
  =\Tr_2^{q}\!\big(\Trq(aA)\big)
  =\Tr_2^{q}\!\big(a\,\Trq(A)\big)
  =\Tr_2^{q}(aB),
\]
and since $B\neq0$, the map $a\mapsto\Tr_2^q(aB)$ is a surjective
$\F_2$-linear map $\F_q\to\F_2$ with kernel of size $q/2$; as
$q\ge4$, the kernel contains some $a\neq0$. For this $a$, let $z$
solve \eqref{ArtinSchreier}. Then $z\notin\F_q$:
indeed, $z\in\F_q$ would give
$z^2+z\in\F_q$, whereas $aA\notin\F_q$. Hence
$z\in\F_{q^2}\setminus\F_q$ and $H(z)=B$.

Now assume $\beta\neq B$ and set $\delta:=\beta+B\neq0$,
$\lambda:=\beta/\delta$. We seek $r\in\F_q^*$ and $u\in\F_q$ with
$H(z)=\beta$ for $z=u+r\alpha$; by \eqref{Hformula} this reads
$r^2Bx=\delta\,t^{q+1}$, i.e.\ the quadratic in $x$
\begin{equation}\label{leq4}
  x^2+Px+Q=0,
  \qquad P:=c+\frac{r^2B}{\delta}=\lambda r^2+r,
  \qquad Q:=c^2d.
\end{equation}
If $\lambda\neq0$, then $P=0$ if and only if $r=1/\lambda$; we
therefore restrict to $r\in\F_q^*\setminus\{1/\lambda\}$ (no
restriction if $\lambda=0$, where $P=r\neq0$), so that \eqref{leq4}
has two distinct roots $x$ and $x+P$.

We claim that it suffices to find such an $r$ with
\begin{equation}\label{condIII}
  \text{(i)}\ \ \Tr_2^q(P)=1
  \qquad\text{and}\qquad
  \text{(ii)}\ \ \Tr_2^q\!\Big(\frac{Q}{P^2}\Big)=0.
\end{equation}
Indeed, suppose $r$ satisfies (i) and (ii). By (ii), both roots of
\eqref{leq4} lie in $\F_q$. Since the roots are $x$ and $x+P$, their
absolute traces differ by $\Tr_2^q(P)=1$; hence exactly one root, say
$x^\ast$, satisfies $\Tr_2^q(x^\ast+r^2d)=0$. By
\cite[Theorem~2.25]{LN}, there then exists $u\in\F_q$ with
$u^2+u=x^\ast+r^2d$, i.e.\ $x^\ast=u^2+u+r^2d$ is of the required
form. For this pair $(u,r)$, the element $z=u+r\alpha$ lies in
$\F_{q^2}\setminus\F_q$ (as $r\neq0$) and satisfies $H(z)=\beta$.

It remains to exhibit such an $r$. For (i): since
$\Tr_2^q(\lambda r^2)=\Tr_2^q(\sqrt{\lambda}\,r)$, we have
\begin{equation}\label{Cond1}
  \Tr_2^q(P)=\Tr_2^q\big((\sqrt{\lambda}+1)\,r\big),
\end{equation}
and $\sqrt{\lambda}+1\neq0$, because $\lambda=1$ would force $B=0$.
For (ii): as $Q=c^2d$ and, for $r\neq0$,
$\dfrac{c}{P}=\dfrac{r+1}{\lambda r+1}$, we get
\begin{equation}\label{Cond2}
  \Tr_2^q\!\Big(\frac{Q}{P^2}\Big)
  =\Tr_2^q\!\left(\Big(\sqrt{d}\,\frac{r+1}{\lambda r+1}\Big)^{2}\right)
  =\Tr_2^q\!\left(\sqrt{d}\,\frac{r+1}{\lambda r+1}\right).
\end{equation}

Let $\beta=0$ (i.e.\ $\lambda=0$).Here \eqref{Cond1} and \eqref{Cond2} become $\Tr_2^q(r)=1$ and
$\Tr_2^q\!\big(\sqrt{d}\,(r+1)\big)=0$, the latter being equivalent to
$\Tr_2^q(\sqrt{d}\,r)=\Tr_2^q(\sqrt{d})=\Tr_2^q(d)=1$. Consider the
$\F_2$-linear map
\[
  L:\F_q\to\F_2^2,
  \qquad
  L(r)=\big(\Tr_2^q(r),\,\Tr_2^q(\sqrt{d}\,r)\big).
\]
Since every $\F_2$-linear functional of $\F_q$ is of the form
$r\mapsto\Tr_2^q(ar)$ for a unique $a\in\F_q$, the two coordinate
functionals of $L$ (given by $a=1$ and $a=\sqrt{d}$) are linearly
independent over $\F_2$ if and only if $d\neq1$; in that case $L$ is
surjective and $(1,1)$ is attained. If $d=1$, the two conditions
coincide with $\Tr_2^q(r)=1$, which has $q/2$ solutions; and $d=0$ is
impossible, as $\Tr_2^q(d)=1$. In every case there exists $r$ with
$\Tr_2^q(r)=\Tr_2^q(\sqrt{d}\,r)=1$; note that $r\neq0$
automatically, since $L(0)=(0,0)$.

Suppose $\beta\neq0$ (i.e.\ $\lambda\neq0$). Recall also that $\lambda\neq1$, since $B\neq0$. Let $N_r$ denote the
number of $r\in\F_q^*\setminus\{1/\lambda\}$ satisfying \eqref{Cond1}
and \eqref{Cond2}. Therefore,
\begin{align*}
 N_r &=\sum_{r\in\F_q^*\setminus\{1/\lambda\}}
   \left(\frac{1}{2}\sum_{a\in\F_2}
     (-1)^{a\big(\Tr_2^q((\sqrt{\lambda}+1)r)+1\big)}\right)
   \left(\frac{1}{2}\sum_{b\in\F_2}
     (-1)^{b\,\Tr_2^q\big(\sqrt{d}\,\frac{r+1}{\lambda r+1}\big)}\right)\\
 &=\frac{1}{4}\sum_{a,b\in\F_2}(-1)^{a}
   \sum_{r\in\F_q^*\setminus\{1/\lambda\}}
   (-1)^{\Tr_2^q\big(a(\sqrt{\lambda}+1)r
     +b\,\sqrt{d}\,\frac{r+1}{\lambda r+1}\big)}
  \;=\;\frac{(q-2)-S_a+S_b-S_{ab}}{4},
\end{align*}
where $S_a$, $S_b$, $S_{ab}$ denote the inner sums for
$(a,b)=(1,0)$, $(0,1)$, $(1,1)$, respectively.

Since $\sqrt{\lambda}+1\neq0$, the sum $S_a$ is a nontrivial additive
character sum over $\F_q$ with the two points $r=0$ and $r=1/\lambda$
removed; hence $|S_a|\le2$. Similarly, as
$r\mapsto\frac{r+1}{\lambda r+1}$ is injective on
$\F_q\setminus\{1/\lambda\}$ and $\sqrt{d}\neq0$, we get $|S_b|\le2$.

For $S_{ab}$, substitute $T=\lambda r+1$, which maps
$\F_q^*\setminus\{1/\lambda\}$ bijectively onto
$\F_q\setminus\{0,1\}$. A direct computation gives
\begin{align*}
 S_{ab}
 &=(-1)^{\Tr_2^q(\varepsilon)}
   \sum_{T\in\F_q\setminus\{0,1\}}
   (-1)^{\Tr_2^q\big(\frac{\sqrt{\lambda}+1}{\lambda}\,T
     +\frac{\sqrt{d}(1+\lambda)}{\lambda}\,\frac{1}{T}\big)},
 \qquad
 \varepsilon:=\frac{\sqrt{\lambda}+\sqrt{d}+1}{\lambda},
\end{align*}
and, completing the sum over $\F_q^*$ by adding and subtracting the
term $T=1$ (whose exponent is
$\Tr_2^q(\varepsilon+\sqrt{d})=\Tr_2^q(\varepsilon)+1$),
\[
 S_{ab}
 =(-1)^{\Tr_2^q(\varepsilon)}
   \,K\!\left(\frac{\sqrt{\lambda}+1}{\lambda},\,
     \frac{\sqrt{d}(1+\lambda)}{\lambda}\right)
   +1.
\]
Both parameters are nonzero: $\sqrt{\lambda}+1\neq0$ as noted, and
$\sqrt{d}(1+\lambda)\neq0$ since $d\neq0$ and $\lambda\neq1$. Hence
the Weil bound~\cite[Theorem~5.45]{LN} gives
$|K(\cdot,\cdot)|\le2\sqrt{q}$, and therefore
$|S_{ab}|\le2\sqrt{q}+1$. Altogether,
\[
 N_r\;\ge\;\frac{(q-2)-2-2-(2\sqrt{q}+1)}{4}
 \;=\;\frac{q-2\sqrt{q}-7}{4},
\]
which is positive for $q\ge16$; since $N_r$ is an integer, $N_r\ge1$.
For $q\in\{4,8\}$, the statement of the lemma has been verified
directly by computer using SageMath.

Therefore an admissible $r$ exists, and by the claim above,
$H(z)=\beta$ for some $z\in\F_{q^2}\setminus\F_q$. Together with the
cases $\beta=B$ and $\beta=0$, this shows that $H$ is surjective.
\end{proof}

In the following result we give a characterization of second family of permutation polynomials, over $\F_{q^2}$. 

\begin{thm}\label{Result2}
Let $q=2^m$, $m>1$ $c_1, c_2, c_3, \gamma \in \F_{q^2}$. Then
$$f_2(x)=x+\gamma \Tr_{q}^{q^2} (c_1 x+ c_2 x^2+ c_3 x^3+ c_3x^{q+2}+x^{2q-1}),$$
is a permutation polynomial of $\F_{q^2}$ if and only if one of the following holds,
\begin{enumerate}
\item $\gamma=0$,
\item $\gamma \in \F_{q}^*$, $c_2, c_3 \in \F_{q}$ and $1+\gamma\Trq(c_1)=\gamma$. 
\item $\gamma \in \F_{q^2} \setminus \F_{q}$, $m$ is odd, $\gamma^{2q-1} \in \F_q$, $\Trq(c_2\gamma^q)=0$, $c_3=0$ and $1+\Trq(c_1\gamma)=\gamma^{2q-1}$,
\end{enumerate}
\end{thm}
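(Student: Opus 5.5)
The plan follows the structure of the proofs of Lemma~\ref{Re1} and Theorem~\ref{Result1}. For sufficiency we use the substitution $x=\alpha+\gamma u$ with $u\in\F_q$. For necessity we look at collisions $f_2(x+u)=f_2(x)$, and the new monomial $x^{2q-1}$ is handled by Lemmas~\ref{L2} and~\ref{L3}.

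\textbf{Reduction.} Write $h(x)=c_1x+c_2x^2+c_3x^3+c_3x^{q+2}+x^{2q-1}$. Then $f_2(x+u)=f_2(x)$ with $u\ne0$ gives $u=\gamma\,\Trq(h(x)+h(x+u))$. Raising to the $q$-th power and adding shows, exactly as in Lemma~\ref{Re1}, that $u=\lambda\gamma$ with $\lambda\in\F_q^*$. The case $\gamma=0$ is trivial. Note that $x^{2q-1}=x^{2q}/x$ on $\F_{q^2}^*$, so for $x\neq0$ the term $\Trq(x^{2q-1})$ is a rational expression in $x$ and $x^q$. This is why quotients $\frac{z^{2q}+z}{z^2+z}$ will appear.

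\textbf{Case $\gamma\in\F_q^*$.} Here $u=\lambda\in\F_q^*$. We compute $\frac{f_2(x+u)+f_2(x)}{u}$ as in~\eqref{np}. The cubic part contributes $\Trq(c_2)u+\Trq(c_3)u^2+\Trq((c_3+c_3^q)(xu+x^2))$-type terms; here $c_3x^3+c_3x^{q+2}=c_3x^2\Trq(x)$, so for $c_3\in\F_q$ the extra term drops. The $x^{2q-1}$ part becomes $\Trq\big(((x+u)^{2q}(x+u)^{-1}+x^{2q}x^{-1})\big)$. With the substitution $z=x/u$ this equals $u\,\Trq\!\big(\frac{z^{2q}+z}{z^2+z}\big)$ up to the simple terms $\Trq(z^{2q})$, which lie in $\F_q$; the points $z\in\F_2$ are treated separately. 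Sufficiency in item~(2): when $c_2,c_3\in\F_q$ and $1+\gamma\Trq(c_1)=\gamma$, the collision equation reduces to $\gamma\,H(z)=\gamma$, i.e. $H(z)=1$. By Lemma~\ref{L2} this never occurs, and the boundary values $z\in\{0,1\}$ are checked by hand. Necessity:
\begin{itemize}
\item If $c_3\notin\F_q$ or $c_2\notin\F_q$, we pick $u$ and $z$ so that the collision equation holds. For $c_3\notin\F_q$ we use Lemma~\ref{L3}, with $A$ built from $c_3$, which gives surjectivity, or alternatively Lemma~\ref{im}.
\item If $c_2,c_3\in\F_q$ but $1+\gamma\Trq(c_1)\ne\gamma$, the equation reads $H(z)=\text{const}\neq1$. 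This is solvable by Lemma~\ref{L2}, giving a collision.
\end{itemize}

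\textbf{Case $\gamma\notin\F_q$.} For sufficiency we substitute $x=\alpha+\gamma u$ and expand $\Trq(h(\alpha+\gamma u))$ as a function of $u\in\F_q$. The conditions $c_3=0$, $\gamma^{2q-1}\in\F_q$ and $\Trq(c_2\gamma^q)=0$ should make the contribution of $x^{2q-1}$ and $x^2$ collapse, so that the equation becomes $u\,(1+\Trq(c_1\gamma)+\gamma^{2q-1}(\ldots))=\text{const}$ or a cube equation. Then $m$ odd together with $1+\Trq(c_1\gamma)=\gamma^{2q-1}$ should give a unique solution, as in Lemma~\ref{Re1}; we must verify this carefully. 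For necessity we write the collision equation in $\lambda$ and $x$. Dividing by $\lambda$ and setting $z=x/(\lambda\gamma)$ turns it into $\Trq\!\big(A\frac{z^{2q}+z}{z^2+z}\big)=\text{value}$ with $A$ a multiple of $\gamma^{2q-1}$.
\begin{itemize}
\item If $\gamma^{2q-1}\notin\F_q$, then Lemma~\ref{L3} makes the left side surjective, so a collision exists.
\item If $\gamma^{2q-1}\in\F_q$, Lemma~\ref{L2} applies. The remaining parameters must make the right-hand constant equal to the excluded value $1$ for every $\lambda$, which forces $c_3=0$, $\Trq(c_2\gamma^q)=0$ and $1+\Trq(c_1\gamma)=\gamma^{2q-1}$.
\item For $m$ even we exhibit a two-root quadratic as in Case~1 of Lemma~\ref{Re1}.
\end{itemize}

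\textbf{Main obstacle.} The hard part is the bookkeeping that brings the collision equation, for general $\lambda$, $c_2$ and $c_3$, exactly into the forms of Lemmas~\ref{L2} and~\ref{L3}. In particular we must show that the $c_2$ and $c_3$ terms, which are polynomial in $z$, cannot cancel the excluded value. My first attempt would be to fix $\lambda$ and vary $z$ along a coset $z+\F_q$. This keeps $z^q+z$ constant, so the polynomial terms become affine in a single trace and can be counted against the $q$-fold fibers, mirroring the counting argument for $Z_B$ and $Z_C$ in Lemma~\ref{Re1}.
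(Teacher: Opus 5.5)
Your handling of $\gamma\in\F_q^*$ and of $\gamma\notin\F_q$ with $c_3=0$ follows the paper's collision argument. Your sufficiency proof for item~(2) is correct and shorter than the paper's route through a depressed cubic and Lemma~\ref{L1}: the collision equation collapses to $\Trq(R(z))=1$ with $R(z)=\frac{z^{2q}+z}{z^2+z}$, which Lemma~\ref{L2} excludes, and $f_2(u)=\gamma u\neq0$ covers $x\in\{0,u\}$.

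The genuine gap is the necessity of $c_3=0$ when $\gamma\notin\F_q$. Your reduction to ``$\Trq(A\,R(z))=$ value'' is valid only for $c_3=0$. With $u=\lambda\gamma$ and $x=uz$, the collision equation reads
\[
  1+\Trq(c_1\gamma)+\lambda\Trq(c_2\gamma^2)+\Trq\big(\gamma^{2q-1}R(z)\big)+\lambda^2G(z)=0,
\]
where $G(z)=\Trq\big(c_3\gamma^3(z^2+z+1)+c_3\gamma^{q+2}(z^2+z^q+1)\big)$. For $c_3\neq0$, $G$ is a non-constant affine function of $z$. So for fixed $\lambda$ the target value moves with $z$, and neither the surjectivity in Lemma~\ref{L3} nor the excluded value $1$ in Lemma~\ref{L2} decides solvability. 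Both of your bullets (``$\gamma^{2q-1}\notin\F_q$ gives a collision'' and ``this forces $c_3=0$'') are therefore unsupported once $c_3\neq0$. You flag this as the main obstacle but only sketch a coset-counting idea.

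In the paper this is Case~2(b), the bulk of the proof. Taking $z\in\F_q\setminus\{0,1\}$ gives a quadratic in $\lambda$ that yields a collision except when $\Trq(c_3\gamma^2)=0$, or when $m$ is odd and a specific relation holds among $\Trq(c_2\gamma^2)$, $1+\Trq(c_1\gamma)+\Trq(\gamma^{2q-1})$ and $\Trq(\gamma)\Trq(c_3\gamma^2)$. Those surviving cases need $z\notin\F_q$ and a comparison of the zero sets of $G$ and of $1+\Trq(c_1\gamma)+\Trq(\gamma^{2q-1}R(z))$. The paper uses cardinality bounds, Kloosterman sums with the Weil bound, invariance under $z\mapsto z+1$ and $z\mapsto z^q$, comparisons of affine hyperplanes, a degree-$4$ polynomial argument, and a direct check at $q=8$. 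Nothing in your proposal substitutes for this.

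\textbf{Smaller points.}

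\emph{Sufficiency in item~(3).} You leave this at ``should \dots\ verify carefully''. The substitution $x=\alpha+\gamma u$ does not produce a pure cube equation; it leads to $u^3+Pu+Q=0$ with $Q$ a multiple of $P$ (cf.\ the proof of Theorem~\ref{Inverse4}). Your own collision framework settles it directly. Since $\gamma^2=\gamma^{2q-1}\gamma^q$, you get $\Trq(c_2\gamma^2)=\gamma^{2q-1}\Trq(c_2\gamma^q)=0$. With $c_3=0$ and $1+\Trq(c_1\gamma)=\gamma^{2q-1}$, the equation becomes $\Trq(R(z))=1$, which Lemma~\ref{L2} rules out. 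Finally, $f_2(u)=\lambda\gamma^{2q}\neq0$ handles $x\in\{0,u\}$.

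\emph{The case $\gamma\in\F_q^*$, $c_3\notin\F_q$.} The $c_3$-term of $(f_2(x+u)+f_2(x))/u$ is $\gamma u\,\Trq(c_3)\Trq(x)$, so Lemmas~\ref{L3} and~\ref{im} are not the relevant tools. Instead take $\Trq(x)\in\{0,u\}$, which makes $\Trq(R(z))=0$, and solve for $u$, as in Cases~1(a)--1(e) of the paper.

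\emph{The ``$m$ even'' step.} No separate argument is needed. If $\gamma\notin\F_q$ and $\gamma^{2q-1}\in\F_q$, then $\gamma^{q-1}\neq1=\gamma^{3(q-1)}$. Since $\gamma^{q-1}$ lies in the subgroup of order $q+1$, this forces $3\mid q+1$, i.e.\ $m$ odd.
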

\begin{proof}
We first show that $f_2$ is a permutation polynomial over $\F_{q^2}$ by splitting our analysis in two cases depending on $\gamma \in \F_q^*$ or $\gamma \not \in \F_q$.

Let $\gamma \in \F_q^*$. To show $f_2(x)$ is a permutation, it is enough to show $f_2(x)=\alpha$ has a unique solution for all $\alpha \in \F_{q^2}$. If $\alpha=0$, then $x= \gamma \Tr_{q}^{q^2} (c_1 x+ c_2 x^2+ c_3x^3+c_3x^{q+2}+x^{2q-1}) \in \F_q$. For $c_2,c_3 \in \F_q$ and $1+\gamma\Trq(c_1)=\gamma$, this further implies that $$f_2(x)=x(1+\gamma \Tr_{q}^{q^2} (c_1))=x(1+\gamma(1+\gamma^{-1}))=x \gamma=0,$$ or equivalently $x=0$. Therefore, for $\alpha=0$, the equation $f_2(x)=0$ has a unique solution $x=0$ and conversely, if $x=0$ is a solution of $f(x)=\alpha$ then $\alpha$ has to be zero. 
Suppose that $\alpha \neq 0$ and write $\frac{x+\alpha}{\gamma} = u \in \F_q$. Then, substituting $x=\alpha+\gamma u$ in $f_2(x)=\alpha$,  multiplying by $\gamma x^{q+1}$ and assuming $c_2, c_3 \in \F_{q}$ with $\Tr_{q}^{q^2}(c_1)=\gamma^{-1}+1$, we obtain
\begin{align}\label{E2}
 \begin{cases}
  & \gamma^3 u^3+(\gamma\Trq(\alpha)+\gamma^2\Trq(c_1\alpha^q+c_2 \alpha^2+ \alpha+ c_3\alpha^3+ c_3\alpha^{2q+1}))u^2+\\
  & (\alpha^{q+1}+\gamma\Trq(c_1\alpha^2+c_2 \alpha^3+c_2 \alpha^{q+2}+ \alpha^2+ c_3\alpha^4))u\\
  & 
  + \Trq(c_1 \alpha^{q+2}+c_2 \alpha^{q+3}+c_3\alpha^{q+4}+c_3\alpha^{2q+3}+\alpha^{3})=0.
 \end{cases}
\end{align}
For $\alpha \in \F_q$, the above equation further simplifies to
$$\gamma^3 u^3+ \gamma (\gamma+1) \alpha u^2+ \gamma \alpha^2 u+ (\gamma^{-1}+1)\alpha^3=(\alpha+\gamma u)^2(\gamma u+(\gamma^{-1}+1)\alpha)=0,$$
and it has a unique solution $u=\dfrac{(\gamma+1)\alpha}{\gamma^2}$, since the other root $u=\frac{\alpha}{\gamma}$ give $x=0$. Next, we consider the case when 
$\alpha \in \F_{q^2} \setminus \F_q$. Denote the coefficients of Equation~\eqref{E2} as follows
\begin{equation*}
 \begin{cases}
  A &=\gamma^3 \\
  B &= (\gamma+\gamma^2)(\alpha+\alpha^q)+c_2\gamma^2(\alpha+\alpha^q)^2+c_3\gamma^2(\alpha+\alpha^q)^3+ c_1\gamma^2(\alpha+\alpha^q)+(\gamma+\gamma^2)\alpha \\
  C &= \gamma c_3(\alpha+\alpha^q)^4+c_2\gamma (\alpha+\alpha^q)^3+ (\gamma c_1+\gamma)(\alpha+\alpha^q)^2+(\gamma+1)\alpha^{2q}+\alpha^{q+1} \\
  D&= c_2 \alpha^{q+1}(\alpha+\alpha^q)^2+ (c_3\alpha^{q+1}+1)(\alpha+\alpha^q)^3+c_1\alpha^{q+1}(\alpha+\alpha^q)+\gamma^{-1} \alpha^{2q+1}+\alpha^{q+2},
 \end{cases}
\end{equation*}
and observe that all the coefficients are in $\F_q$ and we have the following relations between the coefficients
\begin{equation*}
 \begin{cases}
  C&=\gamma (\alpha^{q+1}+\alpha^2+\alpha^{2q})+\gamma^{-1} (\alpha+\alpha^{q})B\\
  D&=\alpha^{q+1} \gamma^{-2} B+\alpha^3+\alpha^{3q}
 \end{cases}
\end{equation*}
On substituting $u=y+B/A$ in Equation~\eqref{E2}, we get
\begin{equation}\label{nq}
 y^3+ay+b=0,
\end{equation}
where, 
\begin{equation*}
 \begin{cases}
  a & =\dfrac{B^2+\gamma^2(\alpha+\alpha^q)B+\gamma^4(\alpha^{q+1}+\alpha^2+\alpha^{2q})}{\gamma^6}\\
  b & =\gamma^{-1}(\alpha+\alpha^q)a.
 \end{cases}
\end{equation*}
For $a=0$, Equation~\eqref{nq} has a unique solution $y=0$ in $\F_q$, that is $u=B/A$. Let $a \neq 0$ then using Lemma~\ref{L1}, Equation~\eqref{nq} has a unique solution if $\Tr_{2}^{q}\left(\frac{a \gamma^2}{(\alpha+\alpha^q)^2}\right) \neq \Tr_{2}^{q}(1)$. Consider
\begin{align*}
 \Tr_{2}^{q}\left(\frac{a \gamma^2}{(\alpha+\alpha^q)^2}\right) & =\Tr_{2}^{q}\left(\frac{B^2+\gamma^2(\alpha+\alpha^q)B+\gamma^4(\alpha^{q+1}+\alpha^2+\alpha^{2q})}{\gamma^4(\alpha+\alpha^q)^2}\right) \\
 & = \Tr_{2}^{q}\left(\frac{B^2+\gamma^2(\alpha+\alpha^q)B+\gamma^4(\alpha^{q+1})}{\gamma^4(\alpha+\alpha^q)^2}\right)+\Tr_{2}^{q}(1)\\
 &= \Tr_{2}^{q}\left(\frac{\alpha^{q+1}}{\alpha^2+\alpha^{2q}}\right)+\Tr_{2}^{q}(1)\\
 & = 1+\Tr_{2}^{q}(1).
\end{align*}
The last equality holds because the equation $x^2 +(\alpha+\alpha^q)x+\alpha^{q+1} = 0$ has two solutions $\alpha, \alpha^q \in \F_{q^2} \setminus \F_q$. Therefore, $f_2(x)=\alpha$ has a unique solution for all $\alpha \in \F_{q^2}$ and hence, $f_2(x)$ is a permutation polynomial when $\gamma, c_2, c_3 \in \F_q$ and $\Trq(c_1)=\gamma^{-1}+1$. 

Let $\gamma \not \in \F_q$, $m$ is odd, $\gamma^{2q-1}=\gamma^{2-q}$, $c_3=0$, $\Trq(c_2\gamma^q)=0$ and $1+\Trq(c_1\gamma)=\gamma^{2q-1}$. Here, to show $f_2(x)$ is a permutation polynomial, we show that $f_2(x+v)+f_2(x) \neq 0$ for all $x \in \F_{q^2}$ and $v \in \F_{q^2}^*$. On the contrary, assume that $f_2(x+v)+f_2(x)=0$ for some $x \in \F_{q^2}$ and $v \in \F_{q^2}^*$. Then,
\begin{equation}\label{pp2}
 f_2(x+v)+f_2(x) =v+\gamma \Trq(c_1v+c_2v^2+(x+v)^{2q-1}+x^{2q-1})=0
\end{equation}
In particular $v=\gamma\,\Trq(c_1v+c_2v^2+(x+v)^{2q-1}+x^{2q-1})\in\gamma\F_q$, so we may write
$v=b\gamma$ with $b\in\F_q^*$.

First suppose $x\in\{0,v\}$. Then \eqref{pp2} reads $f_2(v)=0$, i.e.
\[
  v+\gamma\,\Trq\!\big(c_1v+c_2v^2+v^{2q-1}\big)=0.
\]
Since $v^{2q-1}=b^{2q-1}\gamma^{2q-1}\in\F_q$, its trace vanishes, and
substituting $v=b\gamma$,
\begin{align*}
  f_2(v)=0
  &\iff b\gamma\big(1+\Trq(c_1\gamma)\big)+b^2\gamma\,\Trq(c_2\gamma^2)=0\\
  &\iff b\gamma^{2q}+b^2\gamma^{2q}\big(c_2\gamma^q+c_2^q\gamma\big)=0
  \iff b\gamma^{2q}+b^2\gamma^{2q}\,\Trq(c_2\gamma^q)\,\gamma
  =b\gamma^{2q}=0.
\end{align*}
As $b\gamma^{2q}\neq0$, this is a contradiction. Now let $x\notin\{0,v\}$ and set $z:=x/v\in\F_{q^2}\setminus\{0,1\}$.
Since
\[
  (x+v)^{2q-1}+x^{2q-1}
  =v^{2q-1}\big((z+1)^{2q-1}+z^{2q-1}\big)
  =v^{2q-1}\,\frac{z^{2q}+z}{z^2+z},
\]
and $v^{2q-1}=b^{2q-1}\gamma^{2q-1}\in\F_q$, equation \eqref{pp2}
becomes, after substituting $v=b\gamma$ and dividing by $b\gamma$,
\begin{equation}\label{pp3}
  \gamma^{2q-1}\,b^{2q-2}\,
  \Trq\!\Big(\frac{z^{2q}+z}{z^2+z}\Big)
  +1+\Trq(c_1\gamma)+b\,\Trq(c_2\gamma^2)=0.
\end{equation}
Using the given constraints, Equation \eqref{pp3} reduces to
\[
  \Trq\!\Big(\frac{z^{2q}+z}{z^2+z}\Big)=b^{2-2q}=b^0=1
  \qquad(b\in\F_q^*),
\]
contradicting Lemma~\ref{L2}, by which $1$ is not in the image of this
map on $\F_{q^2}\setminus\{0,1\}$. Hence $f_2(x+v)+f_2(x)=0$ has no
solution, and $f_2$ is a permutation polynomial.

We show that if none of the conditions (2)--(3)
holds (and $\gamma\neq0$), then $f_2$ is not a permutation
polynomial. We treat $\gamma\in\F_q^*$ (Cases 1(a)--1(e): some clause of
condition (2) fails) and $\gamma\in\F_{q^2}\setminus\F_q$ (Case 2:
some clause of condition (3) fails) separately; in each case we
exhibit $x\in\F_{q^2}$, $w\in\F_{q^2}^*$ with $f_2(x+w)=f_2(x)$. In the following table we list all the necessity cases when $\gamma \in \F_q^*$. Observe that $1+\gamma\Trq(c_1)=0$ implies $1+\gamma\Trq(c_1)\neq \gamma$ or in other words if $1+\gamma\Trq(c_1)=\gamma$ then $1+\gamma\Trq(c_1)\neq 0$.

\begin{table}[h!]
\centering
\renewcommand{\arraystretch}{1.4}
\begin{tabular}{c c c c}
  \toprule
  $c_2$ & $c_3$ & $1+\gamma\Trq(c_1)$ & Case\\
  \midrule
  $\in\F_q$    & $\in\F_q$    & $=\gamma$              & --- (condition (2)) \\
  $\in\F_q$    & $\in\F_q$    & $=0$                   & 1(b) \\
  $\in\F_q$    & $\in\F_q$    & $\neq0,\ \neq\gamma$   & 1(d) \\
  $\in\F_q$    & $\notin\F_q$ & $=0$                   & 1(b) \\
  $\in\F_q$    & $\notin\F_q$ & $\neq0$                & 1(e) \\
  $\notin\F_q$ & any          & $=0$                   & 1(c) \\
  $\notin\F_q$ & any          & $\neq0$                & 1(a) \\
  \bottomrule
\end{tabular}
\caption{Coverage of the necessity cases for $\gamma\in\F_q^{*}$.
The first row is condition~(2) of Theorem~\ref{Result2}, where $f_2$
is a permutation polynomial.}
\label{tab:coverageI}
\end{table}

Note that for $x\in\F_q$,
\begin{equation}\label{Fqeval}
  f_2(x)=(1+\gamma\Trq(c_1))x+\gamma\Trq(c_2)\,x^2.
\end{equation}
Moreover, if $w\in\F_q^*$,
$x\notin\{0,w\}$, and $z:=x/w$, then $z\in\F_{q^2}\setminus\{0,1\}$
and
\begin{equation}\label{ratid}
  (x+w)^{2q-1}+x^{2q-1}=w\,\frac{z^{2q}+z}{z^2+z}=:w\,R(z),
\end{equation}
using $w^{2q-1}=w$. Finally, by the proof of Lemma~\ref{L2}
(the case $s\in\{0,1\}$), 
\begin{equation}\label{L2cor}
  \Trq(z)\in\{0,1\}\ \Longrightarrow\ \Trq\!\big(R(z)\big)=0.
\end{equation}

\textbf{Case 1.}  First suppose that $\gamma\in\F_q^*$.

\textbf{Case 1(a).} Let $c_2\notin\F_q$ and $1+\gamma\Trq(c_1)\neq 0$. By \eqref{Fqeval}, $f_2$ vanishes at the two distinct points $x=0$ and $x=(1+\gamma\Trq(c_1))/\big(\gamma\Trq(c_2)\big)$ of $\F_q$; not a permutation.

\textbf{Case 1(b).} Suppose that $c_2\in\F_q$ and $1+\gamma\Trq(c_1)=0$. By \eqref{Fqeval}, $f_2(x)=0$ for every $x\in\F_q$; not a permutation.

\textbf{Case 1(c).} Let $c_2\notin\F_q$ and $1+\gamma\Trq(c_1)=0$.
Take any $w\in\F_q^*$ and $x\notin\{0,w\}$; using \eqref{ratid}, equation $f_2(x+w)+f_2(x)=0$ becomes, after
dividing by $\gamma w$,
\[
  \Trq(c_2)\,w+\Trq(c_3)\Trq(x)\,w+\Trq\!\big(R(z)\big)=0,
  \qquad z=x/w.
\]
If $\Trq(c_3)=0$: choose $w\in\F_q^*$ with $\Trq(c_2)w\neq1$
(possible as $q\ge4$); by Lemma~\ref{L2} there is
$z\in\F_{q^2}\setminus\{0,1\}$ with $\Trq(R(z))=\Trq(c_2)w$, and
$x=wz$ gives a collision. If $\Trq(c_3)\neq0$: put
$w:=\Trq(c_2)/\Trq(c_3)\in\F_q^*$ (nonzero as $c_2\notin\F_q$) and
choose $x$ with $\Trq(x)=w$; then $\Trq(z)=1$, so
$\Trq(R(z))=0$ by \eqref{L2cor}, and the left-hand side equals
$\Trq(c_2)w+\Trq(c_2)w=0$; again a collision.

\textbf{Case 1(d).} Next, let $c_2\in\F_q$, $c_3\in\F_q$, $1+\gamma \Trq(c_1)\neq0$, and $1+\gamma \Trq(c_1) \neq\gamma$.
For $w\in\F_q^*$, $x\notin\{0,w\}$, equation $f_2(x+w)+f_2(x)=0$ reduces to $\Trq(R(z))=(1+\gamma\Trq(c_1))/\gamma$. Since $(1+\gamma\Trq(c_1))/\gamma\notin\{0,1\}$,
Lemma~\ref{L2} provides such a $z$, and $x=wz$ gives a collision.

\textbf{Case 1(e).} Suppose that $c_2\in\F_q$, $c_3\notin\F_q$, $1+\gamma\Trq(c_1)\neq0$. Equation $f_2(x+w)+f_2(x)=0$ is now equivalent to $1+\gamma\Trq(c_1)+\gamma\Trq(c_3)\Trq(x)\,w+\gamma\Trq(R(z))=0$. Let
$w\in\F_q^*$ with $w^2=(1+\gamma\Trq(c_1))/\big(\gamma\Trq(c_3)\big)$ and choose $x$ with $\Trq(x)=w$; then
$\Trq(z)=1$, so $\Trq(R(z))=0$ by \eqref{L2cor}, and
$1+\gamma\Trq(c_1)+\gamma\Trq(c_3)w^2=0$; a collision.

\smallskip
Cases  1(a)-- 1(e) cover every violation of condition (2) in the statement; hence for
$\gamma\in\F_q^*$ we must have $c_2,c_3\in\F_q$ and $1+\gamma\Trq(c_1)=\gamma$.

\textbf{Case 2.}
Let \(\gamma\notin\F_q\) and suppose \(f_2(x+w)=f_2(x)\), so that \(w=s\gamma\) with \(s\in\F_q^*\). Writing \(x=wt\), \(t\in\F_{q^2}\setminus\{0,1\}\), we obtain

\begin{equation}\label{eqgen}
1+\Tr_q(c_1\gamma)+s\Tr_q(c_2\gamma^2)
+\Tr_q\!\big(\gamma^{2q-1}R(t)\big)+s^2G(t)=0,
\end{equation}
where
$$
G(t)=\Tr_q\!\left(c_3\gamma^3(t^2+t+1)
+c_3\gamma^{q+2}(t^2+t^q+1)\right).
$$

\textbf{Case 2(a).} Let \(c_3=0\).
Then

$$
\Tr_q\!\big(\gamma^{2q-1}R(t)\big)
=1+\Tr_q(c_1\gamma)+s\Tr_q(c_2\gamma^2).
$$

If \(\gamma^{2q-1}\notin\F_q\), Lemma~\ref{L3} gives an admissible \(t\) for every fixed \(s\), hence a collision. Therefore $\gamma^{2q-1}\in\F_q$ is necessary. In this case
$$
\Tr_q(R(t))
=\rho(s):=
\frac{1+\Tr_q(c_1\gamma)
+s\gamma^{2-q}\Tr_q(c_2\gamma^q)}
{\gamma^{2q-1}}.
$$

By Lemma~\ref{L2}, every value of \(\F_q\setminus\{1\}\) occurs as
\(\Tr_q(R(t))\). Hence, if \(\Tr_q(c_2\gamma^q)\neq0\), one may choose
\(s\in\F_q^*\) with \(\rho(s)\neq1\), producing a collision. Thus
$\Tr_q(c_2\gamma^q)=0.$

Then \(\rho(s)\) is constant, and another application of Lemma~\ref{L2}
shows that a collision occurs unless

$$
1+\Tr_q(c_1\gamma)=\gamma^{2q-1}.
$$

Therefore these three conditions are necessary.

\medskip
\textbf{Case 2(b).} Let \(c_3\neq0\).
Taking \(t\in\F_q\setminus\{0,1\}\), so that \(R(t)=1\), gives

$$
s^2\Tr_q(\gamma)\Tr_q(c_3\gamma^2)(t^2+t+1)
+s\Tr_q(c_2\gamma^2)
+1+\Tr_q(c_1\gamma)+\Tr_q(\gamma^{2q-1})=0.
$$

Assume first that \(\Tr_q(c_3\gamma^2)\neq0\). Put

$$
E=\Tr_q(c_2\gamma^2),\qquad
\mu=1+\Tr_q(c_1\gamma)+\Tr_q(\gamma^{2q-1}),
$$

and

$$
\delta=
\frac{E}{\Tr_q(\gamma)\Tr_q(c_3\gamma^2)}
+\left(
\frac{\mu}{\Tr_q(\gamma)\Tr_q(c_3\gamma^2)}
\right)^{1/2}.
$$
The quadratic in \(t\) has an admissible root whenever
$\Tr_2^q(\delta/s)=\Tr_2^q(1),$ apart from at most two exceptional values of \(s\). Hence, if \(\delta\neq0\), there is a collision for \(q\ge8\). If \(\delta=0\) and \(m\) is even, the same conclusion follows for any \(s \in \F_q^*\).

Thus the only remaining case is \(m\) odd, \(\delta=0\), i.e.
$$
E^2=
\mu\,\Tr_q(\gamma)\Tr_q(c_3\gamma^2).
$$

We now take \(t\in\F_{q^2}\setminus\F_q\) and using Equation~\eqref{eqgen}, write

$$
s^2G(t)+sE+C(t)=0,
\qquad
C(t)=1+\Tr_q(c_1\gamma)
+\Tr_q\!\big(\gamma^{2q-1}R(t)\big).
$$

Let

$$
Z_G=\{t:G(t)=0\},
\qquad
Z_C=\{t:C(t)=0\},
$$

where \(t\in\F_{q^2}\setminus\F_q\).

If \(E=0\), then a collision exists unless \(Z_G\) and \(Z_C\) form a
disjoint partition of \(\F_{q^2}\setminus\F_q\). Since \(G\) is an
affine linearised polynomial of degree \(2q\), $|Z_G|\le2q,$ so \(|Z_C|\ge q^2-3q\). On the other hand, Lemmas~\ref{L2}--\ref{L3}
give $|Z_C|\le4q.$ Hence \(q^2-3q\le4q\), which is impossible for \(q\ge8\). Therefore a collision always exists when \(E=0\).

Now assume \(E\neq0\). If no collision exists, then necessarily

$$
Z_G=Z_C.
$$

Suppose first \(Z_G=\emptyset\). Lemmas~\ref{L2}--\ref{L3} then force
$$
\gamma^{2q-1}\in\F_q,
\qquad
1+\Tr_q(c_1\gamma)=\gamma^{2q-1}.
$$

Moreover \(c_3\in\F_q\); otherwise the two independent \(\F_q\)-linear
forms

$$
P(t)=\Tr_q(\gamma t),\qquad
Q(t)=\Tr_q(c_3\gamma^2t^2)
$$

make \(G\) attain the value \(0\) outside \(\F_q\).
Consequently

$$
G(t)=c_3\Tr_q(\gamma)^3(\pi^2+\pi+1),
\qquad
\pi=\frac{\Tr_q(\gamma t)}{\Tr_q(\gamma)},
$$

which never vanishes because \(m\) is odd. Therefore the condition,

$$
G(t)s^2+Es+C(t)=0
$$

have a root \(s\in\F_q^*\) reduces to

$$
\Tr_2^q\!\big((\pi^2+\pi+1)(1+\rho)\big)=0,
\qquad
\rho=\Tr_q(R(t)).
$$

Comparing \(t\) and \(t^q\), and writing
\(\tau=\Tr_q(t)\), this is satisfied once $
\Tr_2^q\!\big(\rho(\tau^2+\tau)\big)=1.
$

By the proof of Lemma~\ref{L2}, $\rho=\tau^3(\tau+1)/\big(\nu(\nu+\tau+1)\big)$
with $\nu=t^{q+1}$. Take $\tau\neq1$ and write $\nu=(\tau+1)\psi$, so that
$\nu(\nu+\tau+1)=(\tau+1)^2(\psi^2+\psi)$; the condition
$\Tr_2^q\big(\rho(\tau^2+\tau)\big)=1$ then reads
\[
  \Tr_2^q\Big(\frac{\tau^4}{\psi^2+\psi}\Big)=1 .
\]
Moreover $t\in\F_{q^2}\setminus\F_q$ with $\Tr_q(t)=\tau$ and $t^{q+1}=\nu$
exists exactly when $X^2+\tau X+\nu$ is irreducible over $\F_q$, that is when
\[
  \Tr_2^q\Big(\frac{(\tau+1)\psi}{\tau^2}\Big)=1 ,
\]
and then $\psi\neq0,1$ automatically. Such a pair $(\tau,\psi)$ exists. Fix $\psi\in\F_q\setminus\{0,1\}$ and put
$a=(\psi^2+\psi)^{-1/4}$ and $b=\psi+\psi^{1/2}$, both nonzero. Since
$\Tr_2^q(x^2)=\Tr_2^q(x)$, the two conditions become $\Tr_2^q(a\tau)=1$ and
$\Tr_2^q(b/\tau)=1$. Expanding the indicators, the number of
$\tau\in\F_q^*$ satisfying both is $\big(q+1+K(a,b)\big)/4$, which by the Weil
bound is at least $(\sqrt q-1)^2/4\ge2$ for $q\ge16$; hence some admissible
$\tau$ differs from $1$. For $q=8$ this is checked directly. This contradicts
$Z_G=\emptyset$.

Finally suppose \(Z_G\neq\emptyset\). Then \(c_3\notin\F_q\), so \(G\)
takes every value of \(\F_q\) exactly \(q\) times and $
|Z_G|=|Z_C|=q$. Decompose

$$
\F_{q^2}\setminus\F_q
=\bigcup_{\tau\in\F_q^*}S_\tau,
\qquad
S_\tau=\{t:\Tr_q(t)=\tau\}.
$$

Fix $\alpha\in\F_{q^2}$ with $\alpha+\alpha^q=1$, so that every $t\in S_\tau$ is
uniquely $t=\tau\alpha+a$ with $a\in\F_q$. Expanding $G$ as
\[
 G(t)=\Trq(\gamma)\Trq(c_3\gamma^2)+\Trq(\gamma)\Trq(c_3\gamma^2t^2)
       +\Trq(c_3\gamma^2)\Trq(\gamma t),
\]
and using $\Trq(c_3\gamma^2a^2)=a^2\Trq(c_3\gamma^2)$ and
$\Trq(\gamma a)=a\Trq(\gamma)$ for $a\in\F_q$, we obtain
\[
 G(t)=\Trq(\gamma)\Trq(c_3\gamma^2)\big(1+\phi(\tau)+a^2+a\big),
 \qquad
 \phi(\tau)=\tau^2\,\frac{\Trq(c_3\gamma^2\alpha^2)}{\Trq(c_3\gamma^2)}
            +\tau\,\frac{\Trq(\gamma\alpha)}{\Trq(\gamma)} .
\]
Hence $G(t)=0$ reads $a^2+a=1+\phi(\tau)$, which has either no root or exactly
two, and is solvable precisely when $\Tr_2^q\big(1+\phi(\tau)\big)=0$, that is
when $\Tr_2^q(\phi(\tau))=\Tr_2^q(1)=1$ as $m$ is odd. Since
$\Tr_2^q(\tau^2c)=\Tr_2^q(\tau\sqrt c)$ for $c\in\F_q$, this last condition is
$\Tr_2^q(\omega\tau)=1$, where
\[
 \omega=\left(\frac{\Trq(c_3\gamma^2\alpha^2)}{\Trq(c_3\gamma^2)}\right)^{1/2}
        +\frac{\Trq(\gamma\alpha)}{\Trq(\gamma)} .
\]
Therefore $|Z_G\cap S_\tau|\in\{0,2\}$, and as $|Z_G|=q$ the set $Z_G$ meets
exactly $q/2$ of the sets $S_\tau$; in particular $\omega\neq0$. Since
$Z_C=Z_G$, the same holds for $Z_C$.

If \(\gamma^{2q-1}\in\F_q\), then \(Z_C\) is invariant under
\(t\mapsto t+1\) and \(t\mapsto t^q\). For \(\tau\neq1\), its orbits in
\(S_\tau\) have size \(4\), contradicting
\(|Z_C\cap S_\tau|\le2\); the case \(\tau=1\) is also impossible.
Hence \(\gamma^{2q-1}\notin\F_q\).

Put $B=\Trq(\gamma^{2q-1})\neq0$ and take $\alpha=\gamma^{2q-1}/B$ above, so
that $\alpha+\alpha^q=1$ and $d:=\alpha^{q+1}$ satisfies $\Tr_2^q(d)=1$, in the
notation of the proof of Lemma~\ref{L3}. If \(1+\Tr_q(c_1\gamma)=B\), Lemma~\ref{L3} gives $|Z_C|=q-2,$ contrary to \(|Z_C|=q\). Otherwise, set $\lambda=\mu_0/(\mu_0+B)$, where $\mu_0=1+\Trq(c_1\gamma)$; this
is the parameter $\lambda$ of the proof of Lemma~\ref{L3} for the value
$\beta=\mu_0$, and $\lambda\neq1$ since $B\neq0$. By that proof,
$|Z_C\cap S_\tau|=2$ requires both
\[
  \Tr_2^q\big((\sqrt\lambda+1)\tau\big)=1
  \qquad\text{and}\qquad
  \Tr_2^q\Big(\sqrt d\,\frac{\tau+1}{\lambda\tau+1}\Big)=0 .
\]
The first must hold throughout $\{\tau:\Tr_2^q(\omega\tau)=1\}$; two affine
hyperplanes of $\F_q$ of size $q/2$, one contained in the other, are equal, so
$\omega=\sqrt\lambda+1$.

Suppose $\lambda\neq0$. Then the second condition holds for at least $q/2-1$
values of $\tau$. Expanding the indicators, that number equals
$\tfrac14\big[(q-1)+\Sigma_1+\Sigma_2+\Sigma_3\big]$, where
$|\Sigma_1|,|\Sigma_2|\le1$ because $\tau\mapsto(\tau+1)/(\lambda\tau+1)$
permutes $\F_q\setminus\{1/\lambda\}$, and where the substitution
$v=\lambda\tau+1$ turns $\Sigma_3$ into
$\pm K\big(\omega/\lambda,\ \sqrt d\,(1+\lambda)/\lambda\big)$, of modulus at
most $2\sqrt q$ by the Weil bound, both parameters being nonzero. Hence
$q/2-1\le(\sqrt q+1)^2/4$, which forces $q\le11$.

Suppose $\lambda=0$, that is $\mu_0=0$. Then $\omega=1$, and the second
condition reads $\Tr_2^q(\sqrt d\,\tau)=\Tr_2^q(\sqrt d)$ on
$\{\tau:\Tr_2^q(\tau)=1\}$; equality of affine hyperplanes forces $d=1$. In that
case $Z_C=Z_G$ requires, on each such $S_\tau$, that the root $x$ of
$x^2+\tau x+(\tau^2+\tau)^2=0$ singled out by $\Tr_2^q(x+\tau^2)=0$ equal
$1+\phi(\tau)+\tau^2$. Writing $\phi(\tau)=p_1\tau^2+p_2\tau$ with
$\sqrt{p_1}+p_2=\omega=1$ and substituting, we obtain
\[
  \big[(p_1+1)^2+1\big]\tau^4+(p_1+1)\tau^3+\big[p_2^2+1\big]\tau^2+\tau+1=0
\]
for every $\tau$ with $\Tr_2^q(\tau)=1$. The left-hand side has constant term
$1$, so it is a nonzero polynomial of degree at most $4$ and has at most four
roots, whereas $\{\tau:\Tr_2^q(\tau)=1\}$ has $q/2$ elements; this is impossible
for $q>8$.

Since $m>1$ is odd, only $q=8$ remains, and there the claim has been verified by
direct computation.

Hence in all cases an admissible pair \((s,t)\) exists, producing
\(f_2(x+w)=f_2(x)\). 

\textbf{Subcase 2.} Now suppose $\Trq(c_3\gamma^2)=0$, so that
$c_3\gamma^2\in\F_q$ and $G(t)=\Trq(\gamma)\,c_3\gamma^2\,\Trq(t)^2$.
Writing $\mu:=1+\Trq(c_1\gamma)+\Trq(\gamma^{2q-1})$ and taking
$t\in\F_q\setminus\{0,1\}$ in \eqref{eqgen} gives
$\Trq(c_2\gamma^2)\,s+\mu=0$, in which $t$ does not occur. Hence if
$\Trq(c_2\gamma^2)$ and $\mu$ are both nonzero, $s=\mu/\Trq(c_2\gamma^2)
\in\F_q^{*}$ together with any such $t$ gives a pair $(t,s)$; and if both vanish, every pair $(t,s)$ does.

For the two remaining cases we take $t\in\F_{q^2}\setminus\F_q$, so that
$\Trq(t)\neq0$. Then \eqref{eqgen} reads
\begin{equation}\label{offline}
  A(t)\,s^{2}+\Trq(c_2\gamma^2)\,s+C(t)=0,
\end{equation}
where $A(t):=\Trq(\gamma)\,c_3\gamma^{2}\,\Trq(t)^{2}$ and
$C(t):=1+\Trq(c_1\gamma)+\Trq\big(\gamma^{2q-1}R(t)\big)$. Here
$A(t)\neq0$, since $c_3\neq0$ and $\Trq(\gamma)\neq0$.

If $\Trq(c_2\gamma^2)=0$ and $\mu\neq0$, choose $t$ with $C(t)\neq0$;
then \eqref{offline} reads $s^2=C(t)/A(t)$, which has a unique root
$s\in\F_q^{*}$ as squaring is a bijection of $\F_q$. Such a $t$ exists
because the image of $t\mapsto\Trq(\gamma^{2q-1}R(t))$ on
$\F_{q^2}\setminus\F_q$ has more than one element: it is all of $\F_q$
by Lemma~\ref{L3} if $\gamma^{2q-1}\notin\F_q$, and equals
$\F_q\setminus\{\gamma^{2q-1}\}$ by Lemma~\ref{L2} and \eqref{L2cor}
otherwise.

If $\Trq(c_2\gamma^2)\neq0$ and $\mu=0$, choose instead $t$ with
$C(t)=0$; then \eqref{offline} reads $s\big(A(t)s+\Trq(c_2\gamma^2)\big)=0$
with nonzero root $s=\Trq(c_2\gamma^2)/A(t)\in\F_q^{*}$. Such a $t$
exists: since $\mu=0$ and $R(t)+1=\Trq(t)^2/(t^2+t)$,
\[
  C(t)=\Trq\big(A(R(t)+1)\big)=\Trq(t)^{2}\,\Trq\!\left(\frac{A}{t^{2}+t}\right),
  \qquad A:=\gamma^{2q-1},
\]
so it suffices to make $A/(t^2+t)$ lie in $\F_q$. If $A\in\F_q$, take
$\Trq(t)=1$; then $t^{2}+t\in\F_q^{*}$. If $A\notin\F_q$, then $\Trq(A)\neq0$, and
$t^2+t=aA$ is solvable in $\F_{q^2}$ precisely when
$\Tr_2^{q}\big(a\Trq(A)\big)=0$; as $q\ge4$ this kernel contains some
$a\in\F_q^{*}$, and the resulting $t$ lies outside $\F_q$ because
$aA$ does.

In all cases $f_2$ is not a permutation polynomial.
\end{proof}

\begin{exmp}
Let $q=2^3$ and let $a$ be a root of the irreducible polynomial
$a^6+a^4+a^3+a+1\in\F_2[x]$, so that $\F_{q^2}^*=\langle a\rangle$. Then
\[
  f_2(x)=x+a^3\Trq\!\big(ax+a^3x^2+x^{2q-1}\big)
\]
is a permutation polynomial of $\F_{q^2}$ by
Theorem~\ref{Result2}, applied with $\gamma=a^3$,
$c_1=a$, $c_2=a^3$, $c_3=0$.
\end{exmp}

\begin{rmk}
Theorems 4.1--4.4 of \cite{Jiang} can be viewed as particular cases of Theorem~\ref{Result2}.  The same family is considered in \cite[Theorems 3.2 and 3.3]{HKK}, which correspond to the
cases $(c_1,c_2,c_3)=(1,1,0)$ and $(1,0,1)$ respectively of Theorem~\ref{Result2}. Moreover, the conclusions in~\cite{HKK} are obtained under the additional hypotheses that $m$ must be odd and $\gamma\in\F_q$, both of which are relaxed here.
\end{rmk}

\begin{rmk}
 From Theorem~\ref{Result1} and Theorem~\ref{Result2}, we observe that if $c_3=1$ and $f_2(x)=f_1(x)+\gamma\Trq(x^{2q-1})$ is a permutation polynomial over $\F_{q^2}$ then $f_1(x)$ also permutes $\F_{q^2}$. However, converse does not hold true.
\end{rmk}

\section{Compositional Inverse of the permutation polynomials}\label{S4}
In this section, we compute the compositional inverses of the
families of permutation polynomials obtained in the previous
section. Throughout, for
$f_1(x)=x+\gamma\Trq(c_1x+c_2x^2+c_3x^3+x^{q+2})$ we write
\[
  h_1(x):=\Trq\big(c_1x+c_2x^2+c_3x^3+x^{q+2}\big),
\]
so that $f_1(x)=x+\gamma h_1(x)$ and $h_1(x)\in\F_q$ for all
$x\in\F_{q^2}$. We first treat the case $q=2^m$ with $m$ even.

\begin{thm}\label{Inverse1}
Let $q=2^m$ with $m$ even, and let
$f_1(x)=x+\gamma\Trq(c_1x+c_2x^2+c_3x^3+x^{q+2})$,
$c_1,c_2,c_3,\gamma\in\F_{q^2}$, be a permutation polynomial of
$\F_{q^2}$. By Theorem~\ref{Result1}, one of the following holds,
and in each case the compositional inverse is:
\begin{equation}
  f_1^{-1}(x)=
  \begin{cases}
    x
      & \text{if } \gamma=0,\\[0.6em]
    x+\dfrac{\gamma\,h_1(x)}{1+\gamma\Trq(c_1)}
      & \text{if } \gamma\in\F_q^*,\ c_3=1,\ \Trq(c_2)=0,\\[1em]
    x+\left(\dfrac{h_1(x)}{\Trq(c_2)}\right)^{q/2}
      & \text{if } \gamma\in\F_q^*,\ c_3=1,\ \Trq(c_2)\neq0.
  \end{cases}
\end{equation}
\end{thm}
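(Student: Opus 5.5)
The plan is to invert $f_1$ by solving $f_1(x)=y$ for $x$ explicitly, using the reduction to Equation~\eqref{e1} from the proof of Lemma~\ref{Re1}. First I dispose of the cases excluded by parity. Since $m$ is even, item~(4) of Theorem~\ref{Result1} cannot occur, because it requires $m$ odd. So either $\gamma=0$, or $\gamma\in\F_q^*$ and $c_3=1$ as in items~(2) and~(3). If $\gamma=0$, then $f_1(x)=x$ and $f_1^{-1}(x)=x$.

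Now let $\gamma\in\F_q^*$ and $c_3=1$. Fix $y\in\F_{q^2}$ and let $x$ be the unique preimage, so $f_1(x)=y$. Since $h_1(x)\in\F_q$, we can write $x=y+\gamma u$ with $u=h_1(x)\in\F_q$. As the paper notes, the derivation of \eqref{e1} did not use $\gamma\notin\F_q$, so $u$ satisfies \eqref{e1} with $\alpha=y$. I will recompute the coefficients in \eqref{ABCD} under $\gamma\in\F_q$, $c_3=1$:
\begin{itemize}
\item $A=\Trq(\gamma^3+\gamma^{q+2})=\Trq(2\gamma^3)=0$;
\item $B(y)=\gamma^2\Trq(c_2)+\gamma^2\Trq(y+y^q)=\gamma^2\Trq(c_2)$;
\item $C(y)=1+\gamma\Trq(c_1)+\Trq\big((\gamma+\gamma)y^2\big)=1+\gamma\Trq(c_1)$;
\item $D(y)=h_1(y)$.
\end{itemize}
This recovers \eqref{e2}, namely
\[
\gamma^2\Trq(c_2)\,u^2+\big(1+\gamma\Trq(c_1)\big)u+h_1(y)=0 .
\]

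\textbf{Case $\Trq(c_2)=0$.} By Theorem~\ref{Result1}(3), $1+\gamma\Trq(c_1)\neq0$. The equation is then linear in $u$, so
\[
u=\frac{h_1(y)}{1+\gamma\Trq(c_1)}, \qquad x=y+\frac{\gamma\,h_1(y)}{1+\gamma\Trq(c_1)}.
\]

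\textbf{Case $\Trq(c_2)\neq0$.} By Theorem~\ref{Result1}(2), $\Trq(c_1)=\gamma^{-1}$, so the linear term vanishes and
\[
u^2=\frac{h_1(y)}{\gamma^2\Trq(c_2)} .
\]
Squaring is a bijection of $\F_q$ with inverse $w\mapsto w^{q/2}$, and $(\gamma^{-2})^{q/2}=\gamma^{-1}$ for $\gamma\in\F_q^*$. Hence
\[
u=\gamma^{-1}\left(\frac{h_1(y)}{\Trq(c_2)}\right)^{q/2}, \qquad x=y+\gamma u=y+\left(\frac{h_1(y)}{\Trq(c_2)}\right)^{q/2}.
\]
Renaming $y$ as $x$ gives the stated formulas in both cases.

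As a safeguard I will check that composing each formula with $f_1$ returns the identity. This follows because $u$ is the unique root in $\F_q$ of \eqref{e2}, and $x=y+\gamma u$ then satisfies $f_1(x)=y$ by reversing the substitution. The only real obstacle is bookkeeping: correctly recomputing $A,B,C,D$ when $\gamma\in\F_q$, and taking the square root in $\F_q$ via the exponent $q/2$. The case distinctions themselves are supplied entirely by Theorem~\ref{Result1}.
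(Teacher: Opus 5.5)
Your proposal is correct and is essentially the paper's argument. The paper computes $h_1(f_1(x))=(1+\gamma\Trq(c_1))\,h_1(x)+\gamma^2\Trq(c_2)\,h_1(x)^2$ directly; this is the same identity as your specialization of \eqref{e1} (with $A=0$, $B=\gamma^2\Trq(c_2)$, $C=1+\gamma\Trq(c_1)$, $D=h_1(y)$) read with $u=h_1(x)$, and the paper then solves the resulting linear, respectively purely quadratic, equation in $\F_q$ exactly as you do, while your explicit appeal to Theorem~\ref{Result1} to exclude item~(4) for even $m$ and to guarantee $1+\gamma\Trq(c_1)\neq0$ when $\Trq(c_2)=0$ just makes precise what the paper uses implicitly.
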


\begin{proof}
From Theorem~\ref{Result1}, we know for $f_1(x)$ to be a permutation polynomial $c_3=1$ and $\gamma \in \F_q$, thus we write $f_1(x)=x+\gamma h(x)$, where $h(x)=\Trq(c_1 x+c_2 x^2+x^3+x^{q+2})$. Next, we compute 
\begin{align*}
 h(f_1(x)) & = \Trq(c_1x)+ \gamma h(x)\Trq(c_1)+\Trq(c_2x^2)+ \gamma^2 h(x)^2\Trq(c_2)+\Trq(x)\\
 & = h(x)(1+\gamma\Trq(c_1))+\gamma^2 h(x)^2\Trq(c_2).
\end{align*}
Since, $f_1(x)$ is a permutation under two conditions on coefficients, we split our analysis as follows:

\textbf{Case 1.} Let $c_2 \in \F_q$ and $1+\gamma \Trq(c_1) \neq 0 $. Then $h(f_1(x))= h(x)(1+\gamma\Trq(c_1))$. Let $f_1(x)=y$, that is $y=x+\gamma h(x)$ for some $y \in \F_{q^2}$, then $$h(y)=h(x)(1+\gamma \Trq(c_1))=  \frac{y+x}{\gamma}\left(1+\gamma \Trq(c_1)\right).$$
Therefore, $$f_1^{-1}(y)=x= y+\frac{\gamma h(y)}{1+\gamma \Trq(c_1)}.$$

\textbf{Case 2.} Let $c_2 \notin \F_q$ and $1+\gamma \Trq(c_1)=0 $. Then $h(f_1(x))= \gamma^2 h(x)^2\Trq(c_2)$. Let $f_1(x)=y$, that is $y=x+\gamma h(x)$ for some $y \in \F_{q^2}$, then $$h(y)=\gamma^2 h(x)^2\Trq(c_2)=  (y+x)^2 \Trq(c_2).$$
Therefore, $$(f_1^{-1}(y))^2=x^2= y^2+\frac{ h(y)}{\Trq(c_2)},$$
or equivalently,
$$f_1^{-1}(y)= y+\left(\frac{ h(y)}{ \Trq(c_2)}\right)^{\frac{q}{2}}.$$
\end{proof}

Next, we compute $f_1^{-1}(x)$ over $\F_{q^2}$ where $q=2^m$ and $m$ is odd. This case requires a separate discussion, since $f_1(x)$ may also be a permutation when $\gamma \not \in \F_q$.

\begin{thm}\label{Inverse2}
Let $q=2^m$ with $m$ odd, and let $f_1$ be as above a permutation
polynomial of $\F_{q^2}$. By Theorem~\ref{Result1}, one of the
following holds, and in each case the compositional inverse is:

\begin{equation}\label{eq:42}
f_1^{-1}(x)=
\begin{cases}
x, & \gamma=0,\\[8pt]
x+\left(\dfrac{h_1(x)}{\Trq(c_2)}\right)^{q/2},
  & \gamma\in\F_q^{*},\ c_3=1,\ \Trq(c_2)\neq0,\\[12pt]
x+\dfrac{\gamma\,h_1(x)}{1+\gamma\Trq(c_1)},
  & \gamma\in\F_q^{*},\ c_3=1,\ \Trq(c_2)=0,\\[12pt]
x+\dfrac{\gamma}{A}\Big(B(x)+\big(B(x)^{3}+A^{2}h_1(x)\big)^{1/3}\Big),
  & \begin{array}{@{}l@{}}
      \gamma\notin\F_q,\ A\neq0,\\[2pt]
      \Trq(\gamma^{2})=(c_3\gamma+\gamma^{q})^{q+1},\\[2pt]
      A\big(1+\Trq(c_1\gamma)\big)=\Trq(c_2^{2}\gamma^{4}),
    \end{array}
\end{cases}
\end{equation}
where $A=\Trq(c_3\gamma^3+\gamma^{q+2})$,
$B(x)=\Trq\big((c_2+c_3x+x^q)\gamma^2\big)$, and $(.)^{1/3}$ denotes
the unique cube root map of $\F_q$ (a bijection since $m$ is odd).
\end{thm}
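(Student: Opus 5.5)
The plan is to treat the four branches of \eqref{eq:42} separately. In each branch we start from $y=f_1(x)=x+\gamma h_1(x)$ and recover the scalar $h_1(x)\in\F_q$ from data computable at $y$. The first three branches ($\gamma=0$, and $\gamma\in\F_q^*$ with $c_3=1$) do not depend on the parity of $m$. For them we reuse verbatim the computation in the proof of Theorem~\ref{Inverse1}: with $c_3=1$ and $\gamma\in\F_q$ one has
\[
h_1(f_1(x))=h_1(x)\bigl(1+\gamma\Trq(c_1)\bigr)+\gamma^2h_1(x)^2\Trq(c_2).
\]
By Theorem~\ref{Result1}, exactly one of the two coefficients is nonzero in each branch. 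Solving for $h_1(x)=(y+x)/\gamma$ then gives the stated formulas. In the branch $\Trq(c_2)\ne0$ the square root is $(\cdot)^{q/2}$, applied to elements of $\F_q$; since $y+x\in\gamma\F_q$, we get $(x+y)^2=\gamma^2h_1(x)^2=h_1(y)/\Trq(c_2)$, hence $x=y+(h_1(y)/\Trq(c_2))^{q/2}$.

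The genuinely new branch is $\gamma\notin\F_q$. Here the natural approach is to reread the existence part of Lemma~\ref{Re1} rather than compute $h_1(f_1(x))$. Fix $y\in\F_{q^2}$ and write the unique preimage as $x=y+\gamma u$ with $u\in\F_q$. By \eqref{e1}, $u$ is the unique root in $\F_q$ of
\[
Au^3+B(y)u^2+C(y)u+D(y)=0,
\]
where $D(y)=h_1(y)$ by \eqref{ABCD} and $B$ is exactly the $B(x)$ of the statement, evaluated at $y$. Under the three hypotheses we have $U=V=0$, so the identity $B(y)^2+AC(y)=\Trq(U y^{2q})+V$ established in Lemma~\ref{Re1} gives $C(y)=B(y)^2/A$. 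Substituting and multiplying by $A^2$ turns the cubic into $(Au+B(y))^3=B(y)^3+A^2D(y)$: the expansion is $A^3u^3+A^2Bu^2+AB^2u+A^2D$, compared with $(Au+B)^3=A^3u^3+A^2Bu^2+AB^2u+B^3$ in characteristic $2$. Since $m$ is odd, cubing is a bijection of $\F_q$, so
\[
u=\frac{1}{A}\Bigl(B(y)+\bigl(B(y)^3+A^2h_1(y)\bigr)^{1/3}\Bigr),
\]
and $x=y+\gamma u$ is the claimed formula after renaming $y$ as $x$.

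The main point to check is that the depressed cubic $z^3=(BC+DA)/A^2$ with $z=u+B/A$ from Lemma~\ref{Re1} matches this closed form. The check is direct: $BC+DA=B^3/A+DA$, so $z^3=(B^3+A^2D)/A^3$, and hence $Az=(B^3+A^2D)^{1/3}$. This agrees with the displayed formula, because $u=z+B/A$ and signs are irrelevant in characteristic $2$.

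One should also confirm that all quantities lie in $\F_q$, namely $A$, $B(y)$, and $h_1(y)$, which are traces. Then the cube root of $\F_q$ is the right one to apply, and uniqueness in Lemma~\ref{Re1} guarantees we have found $f_1^{-1}(y)$.
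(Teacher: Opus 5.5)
Your proposal is correct and follows the paper's proof. The $\gamma\in\F_q^*$ branches are read off from $h_1(f_1(x))=h_1(x)\bigl(1+\gamma\Trq(c_1)\bigr)+\gamma^2h_1(x)^2\Trq(c_2)$ exactly as in Theorem~\ref{Inverse1}. For $\gamma\notin\F_q$ you use $AC(y)=B(y)^2$ to collapse the cubic and then take the unique cube root in $\F_q$, as the paper does. Your completion of the cube $(Au+B(y))^3=B(y)^3+A^2h_1(y)$ is the paper's substitution $\beta=As/B(y)$ multiplied through by $B(y)^3$, with the small advantage that you do not need the separate case $B(y)=0$.
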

\begin{proof}
For the cases $\gamma\in\F_q$, the proof is the same as in
Theorem~\ref{Inverse1}. Let $\gamma\notin\F_q$, so that condition
(4) of Theorem~\ref{Result1} holds. Let $y\in\F_{q^2}$ and let
$x=f_1^{-1}(y)$; then $y=f_1(x)=x+\gamma h_1(x)$, so
$x=y+\gamma s$ with $s:=h_1(x)\in\F_q$. Expanding
$s=h_1(y+\gamma s)$ as in proof of Lemma~\ref{Re1}, we obtain
\[
  As^3+B(y)\,s^2+C(y)\,s+h_1(y)=0,
\]
where, in the notation of proof of Lemma~\ref{Re1}
\[
  A=\Trq\big(c_3\gamma^3+\gamma^{q+2}\big),\qquad
  B(y)=\Trq\big((c_2+c_3y+y^q)\gamma^2\big),
\]
\[
  C(y)=1+\Trq\big(c_1\gamma+(c_3\gamma+\gamma^q)y^2\big).
\]
By assumption $A\neq0$, and the two remaining conditions of
Theorem~\ref{Result1}(4) are precisely $U=0$ and $V=0$ of proof of Lemma~\ref{Re1};
hence, by the Claim there, $AC(y)=B(y)^2$ for all $y\in\F_{q^2}$.
Substituting this into the cubic and multiplying by $A^2/B(y)^3$
(assume first $B(y)\neq0$), the substitution $\beta=As/B(y)$ gives
\[
  \beta^3+\beta^2+\beta=\frac{A^2\,h_1(y)}{B(y)^3},
  \qquad\text{i.e.}\qquad
  (\beta+1)^3=\frac{A^2\,h_1(y)}{B(y)^3}+1.
\]
Note that $s,\beta,h_1(y),B(y),A$ all lie in $\F_q$, so the
computation takes place in $\F_q$; since $m$ is odd we have
$\gcd(3,q-1)=1$, and cubing is a bijection of $\F_q$. Therefore
\[
  \beta=1+\Big(\frac{A^2h_1(y)}{B(y)^3}+1\Big)^{1/3},
  \qquad
  s=\frac{B(y)+\big(B(y)^3+A^2h_1(y)\big)^{1/3}}{A}.
\]
If instead $B(y)=0$, then $C(y)=B(y)^2/A=0$ as well, the cubic
reduces to $As^3=h_1(y)$, and
$s=\big(h_1(y)/A\big)^{1/3}$, which agrees with the displayed
formula at $B(y)=0$. Finally, since $\beta\mapsto(\beta+1)^3$ is a
bijection of $\F_q$, the cubic determines $s$ uniquely in every
case; hence $x=y+\gamma s$ is the unique preimage of $y$, and
\[
  f_1^{-1}(y)=y+\frac{\gamma}{A}
  \Big(B(y)+\big(B(y)^3+A^2h_1(y)\big)^{1/3}\Big).
  \qedhere
\]
\end{proof}

\begin{rmk}
Theorems~\ref{Inverse1} and \ref{Inverse2} determine the
compositional inverse of $f_1$ for every choice of coefficients for
which $f_1$ is a permutation polynomial of $\F_{q^2}$. In
particular, they generalize the corresponding results
of~\cite{SKP}, with shorter proofs.
\end{rmk}

\begin{rmk}
 From Theorem~\ref{Inverse1} and Theorem~\ref{Inverse2}, $f_1(x)=x+\gamma \Tr_{q}^{q^2} (c_1 x+ c_2 x^2+ x^3+x^{q+2})$ is an involution over $\F_{q^2}$ if $\gamma, c_1, c_2 \in \F_q$.
\end{rmk}

\begin{rmk}
Very recently, Wu, Pang, He and Yuan~\cite{WPHY} computed
compositional inverses of permutation polynomials of the form
$x+\gamma\Tr_q^{q^n}(H(x))$ for certain monomial-type $H$ over
$\F_{q^n}$, and observed in particular that the four polynomials
of~\cite{Jiang} are involutions for every $\gamma\in\F_q$
\cite[Remark~2]{WPHY}. Theorems~\ref{Inverse1} and \ref{Inverse2}
extend this in two directions for the family $f_1$: the involution
property holds for all $c_1,c_2\in\F_q$, $c_3=1$ and
$\gamma\in\F_q^*$, and, conversely, these are the only involutions
within the family of Theorem~\ref{Result1}(3), since $f_1^{-1}=f_1$
forces $1+\gamma\Trq(c_1)=1$, i.e.\ $c_1\in\F_q$. 
\end{rmk}

Next, we consider the second family. Recall
$f_2(x)=x+\gamma\Trq\big(c_1x+c_2x^2+c_3x^3+c_3x^{q+2}+x^{2q-1}\big)$
and write
\[
  h_2(x):=\Trq\big(c_1x+c_2x^2+c_3x^3+c_3x^{q+2}+x^{2q-1}\big),
\]
so that $f_2(x)=x+\gamma h_2(x)$ and $h_2(x)\in\F_q$ for all
$x\in\F_{q^2}$.

\begin{thm}\label{Inverse3}
 Let $q=2^m$, $f_2(x)=x+\gamma \Tr_{q}^{q^2} (c_1 x+ c_2 x^2+ c_3x^3+c_3x^{q+2}+x^{2q-1}),$ where $c_1 \in \F_{q^2}$, $c_2, c_3  \in \F_{q}$ and $\gamma \in \F_{q}^*$ such that $1+\gamma\Trq(c_1)=\gamma$. Then the compositional inverse of $f_2(x)$ is
\begin{equation}
  f_2^{-1}(x)=
  \begin{cases}
    \gamma^{-1}x
      & \text{if } x\in\F_q,\\[0.8em]
    x+h_2(x)+\dfrac{x^3+x^{3q}}{x^{q+1}}
      & \text{if } x\notin\F_q,\ \beta=0,\\[0.8em]
    x+h_2(x)+\dfrac{x^3+x^{3q}}{x^{q+1}}+R+\dfrac{\beta}{R}
      & \text{if } x\notin\F_q,\ \beta\neq0,\ m \text{ odd},\\[0.8em]
    x+h_2(x)+\dfrac{x^3+x^{3q}}{x^{q+1}}
      +\dfrac{\theta^{(q+2)/3}+\theta^{(2q+1)/3}}{\beta}
      & \text{if } x\notin\F_q,\ m \text{ even},
  \end{cases}
\end{equation}
where
\[
  \beta=h_2(x)^2+(x+x^q)\,h_2(x)
        +\frac{(x+x^q)^6}{x^{2q+2}}
        +\frac{(x+x^q)^4}{x^{q+1}}
        +(x+x^q)^2+x^{q+1},
\]
\[
  R=\bigg(\beta\,(x+x^q)\sum_{i=0}^{(m-1)/2}
      \Big(\frac{\beta}{(x+x^q)^2}\Big)^{4^i}\bigg)^{1/3},
\]
and, in the last case, $\theta\in\F_{q^2}\setminus\F_q$ is either
root of $\theta^2+\beta(x+x^q)\theta+\beta^3=0$ (both roots give
the same value). For $m$ even one always has $\beta\neq0$, so the
four cases are exhaustive.
\end{thm}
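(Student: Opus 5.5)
Throughout, $\gamma,c_2,c_3\in\F_q$ and $\Trq(c_1)=\gamma^{-1}+1$, so condition (2) of Theorem~\ref{Result2} holds and $f_2$ is a permutation. To invert it, fix $y\in\F_{q^2}$ and write the unique preimage as $x=y+\gamma u$ with $u:=h_2(x)\in\F_q$. Then $f_2^{-1}(y)=y+\gamma u$, and the task reduces to solving for $u$ explicitly, expressing the answer through $h_2(y)$ and $y+y^q$. I would reuse the reduction from the sufficiency part of the proof of Theorem~\ref{Result2}, with $\alpha$ replaced by $y$.

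\textbf{Case $y\in\F_q$.} By \eqref{Fqeval} with $\Trq(c_2)=0$, $f_2(x)=(1+\gamma\Trq(c_1))x=\gamma x$ for $x\in\F_q$. Since $f_2$ maps $\F_q$ into itself and is bijective, the preimage of $y$ is $\gamma^{-1}y$. This matches the factorization $(\alpha+\gamma u)^2(\gamma u+(\gamma^{-1}+1)\alpha)$ found there.

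\textbf{Case $y\notin\F_q$.} Here $u$ satisfies the cubic \eqref{E2}, which becomes \eqref{nq} after the shift $u=v+B/A$ with $A=\gamma^3$. It reads $v^3+av+b=0$ with $b=\gamma^{-1}(y+y^q)a$. I would rescale by setting $w=\gamma v$ (or similar) so that the cubic becomes $w^3+\beta w+\beta(y+y^q)=0$. Then I would rewrite $\beta$ (essentially $\gamma^4a$) through $h_2(y)$ instead of $B$. This uses $B=\gamma^2 h_2(y)+\dots$ together with the coefficient relations $C=\gamma(\dots)+\gamma^{-1}(y+y^q)B$ and $D=\dots$, which should produce exactly the displayed expression for $\beta$. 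The shift term $B/A$, once rescaled and added to $y$, should produce $h_2(x)+\frac{x^3+x^{3q}}{x^{q+1}}$. Since $x^3+x^{3q}=(x+x^q)^3+x^{q+1}(x+x^q)$, that term is a rational function of $y+y^q$ and $y^{q+1}$. I expect this bookkeeping, matching the normalizations of $\gamma$ in the final formula, to be routine but error-prone.

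\textbf{Solving the depressed cubic.} If $\beta=0$ then $w=0$ and we are done. If $\beta\ne0$, the cubic $w^3+\beta w+\beta s=0$ with $s=y+y^q\neq0$ has a unique root in $\F_q$, since in the sufficiency proof Lemma~\ref{L1} placed it in the $(1,2)$ case. I would solve it by Cardano's method in characteristic $2$. Put $w=R+\beta/R$; then $R^6+\beta s R^3+\beta^3=0$, so $\theta=R^3$ satisfies $\theta^2+\beta s\theta+\beta^3=0$.

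For $m$ odd, the trace condition from Lemma~\ref{L1} shows that this quadratic has its roots in $\F_q$. Indeed, after dividing by $\beta^2s^2$, the absolute trace of $\beta/s^2$ equals $\Tr_2^q(1)+1=0$. An explicit root of $Z^2+Z=\beta/s^2$ is given by the half-trace $\sum_{i=0}^{(m-1)/2}(\beta/s^2)^{4^i}$. Then $\theta=\beta s\cdot(\text{half-trace})$, and taking the unique cube root (recall $\gcd(3,q-1)=1$) gives $R$.

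For $m$ even, I would first show $\beta\ne0$. Then $\theta\in\F_{q^2}\setminus\F_q$, with $\theta^{q+1}=\beta^3$ and $\theta+\theta^q=\beta s$. I would choose the cube root $R=\theta^{(q+2)/3}\cdot(\cdot)$ normalized so that $R^{q+1}=\beta$, which forces $\beta/R=R^q$. Then $w=R+R^q=(\theta^{(q+2)/3}+\theta^{(2q+1)/3})/\beta$; note that $3\mid q+2$ and $3\mid 2q+1$ when $m$ is even.

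\textbf{Main obstacle.} The hardest step is justifying that this $R$ is a genuine cube root with $R^{q+1}=\beta$, and that both roots $\theta$ give the same value. Swapping $\theta\leftrightarrow\theta^q$ exchanges the two summands, so the symmetric sum is unchanged. Proving $\beta\neq0$ for even $m$ should follow from the $(1,2)$ factorization: $\beta=0$ would make $0$ a triple root, contradicting uniqueness only indirectly. I would try to show directly that $\beta=0$ forces $\Tr_2^q(\cdot)$ to take a value incompatible with $y\notin\F_q$.
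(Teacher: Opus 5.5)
Your plan follows the paper's proof. You reduce to the depressed cubic $w^3+\beta w+\beta T=0$ with $T=y+y^q$. You solve it by Cardano through the resolvent $\theta^2+\beta T\theta+\beta^3=0$, using the half-trace and unique cube roots for $m$ odd and a cube root of norm $\beta$ for $m$ even.

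The differences are cosmetic. For $y\in\F_q$, you restrict $f_2$ to $\F_q$, where it is $x\mapsto\gamma x$; this is cleaner than the paper's root analysis of the cubic. For $y\notin\F_q$, the paper does not recycle \eqref{E2}/\eqref{nq}. It rederives the cubic in $z=\gamma s$, where $1+\gamma\Trq(c_1)=\gamma$ makes $\gamma$ drop out: $h_2(y)+z=\Trq\big((y+z)^{2q-1}+y^{2q-1}\big)$. If you recycle \eqref{nq}, two corrections apply:
\begin{itemize}
\item The substitution $w=\gamma v$ gives $\beta=\gamma^2a$, not $\gamma^4a$.
\item The identity you need is $B=\gamma^2\big(h_2(y)+T+T^3/N\big)$ with $N=y^{q+1}$. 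Then $\gamma u=w+B/\gamma^2$ together with $T+T^3/N=\Trq(y^3)/N$ gives exactly the shift term.
\end{itemize}

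There is one small gap: you do not prove $\beta\neq0$ for $m$ even. Your suggested route through the $(1,2)$ factorization cannot work. If $\beta=0$ the cubic is $w^3=0$, whose root in $\F_q$ is still unique, and Lemma~\ref{L1} does not apply since $b=\beta T=0$.

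Instead, use the identity you already quote, $\Tr_2^q(\beta/T^2)=\Tr_2^q(1)+1$. It holds for every $y\notin\F_q$, because its derivation uses only $\Tr_2^q(N/T^2)=1$ and never $\beta\neq0$. So $\beta=0$ forces $\Tr_2^q(1)=1$, i.e.\ $m$ odd.

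What you call the main obstacle, the normalization, is a two-line check. Take $R=\theta^{(q+2)/3}/\beta$. Then $R^3=\theta\cdot\theta^{q+1}/\beta^3=\theta$, and $R^{q+1}=(\theta^{q+1})^{(q+2)/3}/\beta^{q+1}=\beta^{q+2}/\beta^{2}=\beta$. Hence $\beta/R=R^q=\theta^{(2q+1)/3}/\beta$, since $\theta^{(q+2)/3}\theta^{(2q+1)/3}=\theta^{q+1}=\beta^3$. Replacing $\theta$ by $\theta^q$ replaces $R$ by $(\beta R)^q/\beta=R^q$, so $R+R^q$ is unchanged. With these filled in, your argument is complete and coincides with the paper's.
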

\begin{proof}
Let $y=f_2(x)$, so that $x=f_2^{-1}(y)=y+\gamma s$ with
$s:=h_2(x)\in\F_q$. Clearly $f_2^{-1}(0)=0$, so assume $x\neq0$;
note that $y=0$ or $y=\gamma s$ would force $x=0$, hence
$y\notin\{0,\gamma s\}$. Throughout, write
\[
  T=y+y^q,\qquad N=y^{q+1},\qquad A=h_2(y)+\frac{T^3}{N}.
\]
Expanding $s=h_2(y+\gamma s)$ and using
$1+\gamma\Trq(c_1)=\gamma$, $c_2,c_3\in\F_q$, we obtain
\[
  h_2(y)+\gamma s
  =\frac{T^3\big(\gamma^2s^2+\gamma sT\big)}{N(y+\gamma s)^{q+1}},
\]
which, upon clearing denominators, becomes the cubic (in
$z=\gamma s$)
\[
  z^3+\frac{Nh_2(y)+\Trq(y^3)}{N}\,z^2
     +\frac{\Trq\big(y^4+y^{q+2}h_2(y)\big)+N^2}{N}\,z
     +h_2(y)\,N=0.
\]
The substitution $r=z+A+T$ depresses this to
\begin{equation}\label{cubicin}
  r^3+\beta r+\beta T=0,
  \qquad
  \beta:=A^2+AT+T^2+N.
\end{equation}

If $y\in\F_q$, then \eqref{cubicin} reads
$r^3+\gamma^{-2}y^2r=0$, with roots $r=0$ and $r=\gamma^{-1}y$,
i.e.\ $z=(1+\gamma^{-1})y$ or $z=y$. Since $z=y$ means
$y=\gamma s$, which is excluded, we get $z=(1+\gamma^{-1})y$ and
$f_2^{-1}(y)=\gamma^{-1}y$.

Now let $y\in\F_{q^2}\setminus\F_q$, so $T\neq0$. Since
$\lambda^2+T\lambda+N$ is irreducible over $\F_q$ (its roots are
$y,y^q\notin\F_q$), we have $\Tr_2^q(N/T^2)=1$, and therefore
\begin{equation}\label{keytrace}
  \Tr_2^q\!\Big(\frac{\beta}{T^2}\Big)
  =\Tr_2^q\!\Big(\Big(\frac{A}{T}\Big)^2+\frac{A}{T}\Big)
   +\Tr_2^q(1)+\Tr_2^q\!\Big(\frac{N}{T^2}\Big)
  =\Tr_2^q(1)+1.
\end{equation}

Suppose first $\beta=0$, i.e.\ $A/T$ is a root of
$\lambda^2+\lambda+1+N/T^2$; this requires
$\Tr_2^q(1+N/T^2)=\Tr_2^q(1)+1=0$, which forces $m$ odd. In this
case \eqref{cubicin} gives $r=0$, so $z=A+T$ and
\[
  f_2^{-1}(y)=y+h_2(y)+\frac{\Trq(y^3)}{N},
\]
using $T^3/N=T+\Trq(y^3)/N$.

Suppose now $\beta\neq0$. Then $\beta T\neq0$, and by
\eqref{keytrace} and Lemma~\ref{L1} (with $a=\beta$,
$b=\beta T$, so $a^3/b^2=\beta/T^2$), equation \eqref{cubicin} has
a unique root $r\in\F_q$. The resolvent of \eqref{cubicin} is
\begin{equation}\label{resolvent}
  \theta^2+\beta T\theta+\beta^3=0,
\end{equation}
whose solvability over $\F_q$ is governed by
$\Tr_2^q(\beta^3/(\beta T)^2)=\Tr_2^q(\beta/T^2)$, i.e.\ by
\eqref{keytrace}. We distinguish the parity of $m$.

\smallskip
\textbf{Case 1.} Let $m$ be odd. By \eqref{keytrace},
\eqref{resolvent} has the two roots
\[
  \theta_1=\beta T\sum_{i=0}^{(m-1)/2}
    \Big(\frac{\beta}{T^2}\Big)^{4^i},
  \qquad
  \theta_2=\theta_1+\beta T
\]
in $\F_q$ (the first being the standard half-trace solution of
$w^2+w=\beta/T^2$, scaled by $\beta T$). Since
$\gcd(3,q-1)=1$, every element of $\F_q$ has a unique cube root;
from $\theta_1\theta_2=\beta^3$ and this uniqueness,
$\theta_1^{1/3}\theta_2^{1/3}=\beta$, whence
$\theta_1^{1/3}+\beta/\theta_1^{1/3}
 =\theta_2^{1/3}+\beta/\theta_2^{1/3}$.
Setting $u=\theta^{1/3}$, $v=\beta/\theta^{1/3}$ for
$\theta\in\{\theta_1,\theta_2\}$, we have $uv=\beta$ and
$u^3+v^3=\theta_1+\theta_2=\beta T$, so
\[
  (u+v)^3=u^3+v^3+uv(u+v)=\beta T+\beta(u+v),
\]
i.e.\ $r=u+v$ solves \eqref{cubicin}. Therefore
\[
  f_2^{-1}(y)=y+h_2(y)+\frac{\Trq(y^3)}{N}
  +\theta_1^{1/3}+\frac{\beta}{\theta_1^{1/3}}.
\]

\smallskip
\textbf{Case 2.} Let $m$ be even By \eqref{keytrace},
\eqref{resolvent} is irreducible over $\F_q$; its roots in
$\F_{q^2}$ are conjugates $\tilde\theta,\tilde\theta^q$ with
$\tilde\theta+\tilde\theta^q=\beta T$ and
$\tilde\theta^{q+1}=\beta^3$. Since $m$ is
even, $3\mid q+2$, and we may define
\[
  W:=\frac{\tilde\theta^{(q+2)/3}}{\beta}.
\]
Then $W^3=\tilde\theta^{q+2}/\beta^3
=\tilde\theta\,\tilde\theta^{q+1}/\beta^3=\tilde\theta$ and
$W^{q+1}=\tilde\theta^{(q+1)(q+2)/3}/\beta^{q+1}
=\beta^{(q+2)}/\beta^{q+1}=\beta$, so $W$ is a cube root of
$\tilde\theta$ of norm $\beta$. Consequently
$\beta/W=W^q$ and $r=W+W^q\in\F_q$; exactly as in Case 1
($u=W$, $v=W^q$, $uv=\beta$,
$u^3+v^3=\tilde\theta+\tilde\theta^q=\beta T$), $r$ solves
\eqref{cubicin}. Hence
\[
  f_2^{-1}(y)=y+h_2(y)+\frac{\Trq(y^3)}{N}
  +\frac{\tilde\theta^{(q+2)/3}}{\beta}
  +\frac{\tilde\theta^{(2q+1)/3}}{\beta},
\]
where we used $W^q=\tilde\theta^{q(q+2)/3}/\beta
=\tilde\theta^{(2q+1)/3}/\beta$ (as
$\tilde\theta^{q^2}=\tilde\theta$). Starting instead from
$\tilde\theta^q$ yields $W'=W^q$ and the same value of $r$, so the
formula is independent of the choice of root of
\eqref{resolvent}. \qedhere
\end{proof}

\begin{thm}\label{Inverse4}
Let $q=2^m$ with $m$ odd, and let
$f_2(x)=x+\gamma\Trq\big(c_1x+c_2x^2+x^{2q-1}\big)$ with
$c_1,c_2\in\F_{q^2}$ and $\gamma\in\F_{q^2}\setminus\F_q$
satisfying
\[
  \gamma^{3(q-1)}=1,\qquad
  1+\Trq(c_1\gamma)=\gamma^{2q-1},\qquad
  \Trq(c_2\gamma^q)=0,
\]
so that $f_2$ is a permutation polynomial of $\F_{q^2}$ by
Theorem~\ref{Result2}(3). Then
\begin{equation}
  f_2^{-1}(x)=
  \begin{cases}
    0
      & \text{if } x=0,\\[0.8em]
    x+\gamma^{q-1}\Big(h_2(x)+\dfrac{\Trq(x^3)}{x^{q+1}}\Big)
      & \text{if } x\neq0,\ r=0,\\[0.8em]
    x+\gamma^{q-1}\Big(h_2(x)+\dfrac{\Trq(x^3)}{x^{q+1}}
      +r^{1/3}+\dfrac{P}{r^{1/3}}\Big)
      & \text{if } x\neq0,\ r\neq0,
  \end{cases}
\end{equation}
where
\[
  P=h_2(x)^2+h_2(x)\,\Trq\big(\gamma^{q-1}x^q\big)
    +\frac{\Trq(\gamma x)^3\,\Trq(x)^3}{\gamma^3\,x^{2q+2}},
  \qquad
  r=P\Big(h_2(x)+\frac{\Trq(x)^3}{x^{q+1}}\Big).
\]
\end{thm}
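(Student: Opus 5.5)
The plan is to follow the proof of Theorem~\ref{Inverse3}: reduce $f_2(x)=y$ to a cubic over $\F_q$ and solve it by Cardano's method, using that cubing is a bijection of $\F_q$ because $m$ is odd.

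\textbf{Reduction to a cubic.} First note that $\gamma^{3(q-1)}=1$ is equivalent to $\gamma^{2q-1}\in\F_q$, the hypothesis of Theorem~\ref{Result2}(3). Indeed, $(\gamma^{2q-1})^{q-1}=\gamma^{2q^2-3q+1}=\gamma^{-3(q-1)}$, since $\gamma^{q^2}=\gamma$. It also gives $\gamma^{2-q}=\gamma^{2q-1}\in\F_q$ and $\gamma^{q-1}=\gamma\cdot\gamma^{q-2}\in\gamma\F_q$.

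Let $y\in\F_{q^2}$ and $x=f_2^{-1}(y)$. Since $f_2(0)=0$ (the trace of $\gamma^{-1}\cdot 0$ vanishes), we may take $y\neq0$. Then $x=y+\gamma s$ with $s=h_2(x)\in\F_q$. Rewrite this as $x=y+\gamma^{q-1}w$ with $w:=\gamma^{2-q}s\in\F_q$, which explains the shape of the claimed formula.

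Expand $s=h_2(y+\gamma s)$ using $c_3=0$:
\[
s=h_2(y)+s\,\Trq(c_1\gamma)+s^2\,\Trq(c_2\gamma^2)+\Trq\big((y+\gamma s)^{2q-1}\big)+\Trq(y^{2q-1}).
\]
Then simplify in three steps.
\begin{itemize}
\item The hypothesis $1+\Trq(c_1\gamma)=\gamma^{2q-1}$ turns the linear terms into $\gamma^{2q-1}s$.
\item The hypothesis $\Trq(c_2\gamma^q)=0$ means $c_2\gamma^q\in\F_q$. Together with $\gamma^{2-q}=\gamma^{2q-1}$, this should make the $s^2$ coefficient collapse into the $y$-dependent terms, or vanish. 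I will check which.
\item Write $\Trq\big((y+\gamma s)^{2q-1}\big)=\Trq\big((y^q+\gamma^q s)^2(y+\gamma s)\big)/(y+\gamma s)^{q+1}$, where the denominator is a norm, hence a quadratic in $s$ over $\F_q$.
\end{itemize}
Clearing this denominator and passing to $w$ gives a cubic $w^3+a_2w^2+a_1w+a_0=0$ with coefficients in $\F_q$. The coefficients are expressed through $h_2(y)$, $\Trq(y)$, $\Trq(\gamma y)$, $\Trq(\gamma^{q-1}y^q)$ and $N=y^{q+1}$.

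\textbf{Solving the cubic.} Shift $w=\rho+h_2(y)+\Trq(y^3)/N$, mirroring $r=z+A+T$ in Theorem~\ref{Inverse3}, to remove the quadratic term. I expect a depressed cubic
\[
\rho^3+P\rho+\Big(r+\tfrac{P^3}{r}\Big)=0
\]
in the form required by Cardano, with $P$ and $r$ as in the statement. If the constant term comes out as $P\,(h_2(y)+\Trq(y)^3/N)=r$, I will instead use the resolvent $\theta^2+r\theta+P^3=0$, exactly as in Theorem~\ref{Inverse3}.

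In both versions, take $u=r^{1/3}$ (the unique cube root in $\F_q$) and $v=P/u$. Then $uv=P$, and $(u+v)^3=u^3+v^3+P(u+v)$ shows that $\rho=u+v$ is a root. This yields the third case of the formula.

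If $r=0$, then $P=0$ or the bracket vanishes, and the cubic degenerates so that $\rho=0$ is the root; this gives the second case. Uniqueness needs no discriminant analysis: $f_2$ is a permutation by Theorem~\ref{Result2}(3), so the cubic has exactly one admissible root $w\in\F_q$, and any root we exhibit must be it.

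\textbf{Main obstacle.} The hard step is the bookkeeping in the expansion: showing that after clearing the norm denominator and shifting, the coefficients take exactly the stated forms $P$ and $r$. The difficulty lies in the cross terms involving $\Trq(\gamma y)$ and $\gamma^{3}$, where $\gamma^{3(q-1)}=1$ must be used repeatedly, for instance to rewrite $\gamma^{3q}=\gamma^3$ and $\gamma^{q-1}\gamma^{2-q}=\gamma$.

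I would first test the identities on the special case $y\in\F_q$, where the cubic should factor as in Theorem~\ref{Inverse3}. I would then verify the general identity by expressing everything in the $\F_q$-basis $\{1,\gamma\}$ of $\F_{q^2}$, checking numerically at $q=8$ if needed.
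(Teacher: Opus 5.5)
Your route is the paper's own. Since $\gamma^{2-q}=\gamma^{2q-1}=\gamma+\gamma^q$, your $w=\gamma^{2-q}s$ is exactly the paper's $z=\Trq(\gamma)s$. Your shift by $h_2(y)+\Trq(y^3)/N$ is its $z=u+B'$, and your Cardano step $u=r^{1/3}$, $v=P/u$ is its final step. Your first guess for the constant term is the correct one: it is $Q=P\,\Trq(\gamma^{q-1}y^q)$, which equals $r+P^3/r$ when $r\neq0$.

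\textbf{The gap: the uniqueness shortcut.} Clearing the denominator $(y+\gamma s)^{q+1}$ creates a spurious root. At $s=y/\gamma$ both this norm and $\Trq\big((y+\gamma s)^3\big)$ vanish. So whenever $y\in\gamma\F_q^*$ (equivalently $\Trq(\gamma y^q)=0$, the paper's case $Q=0$), the cleared cubic has the root $s=y/\gamma$. That root gives $x=0$, not the preimage of $y$. Bijectivity of $f_2$ only says that exactly one root in $\F_q$ yields $x\neq0$; it does not say that the root you exhibit is that one.

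For instance, take $y=\gamma t$ with $t\in\F_q^*$. Then $P=t^2\neq0$, $Q=0$, and the depressed cubic is $\rho(\rho+P^{1/2})^2$, with two distinct roots in $\F_q$. Your argument would certify $\rho=P^{1/2}$ just as well as $\rho=0$.

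The missing step is the check the paper makes in its case $P\neq0$, $Q=0$. There $P=\big(B'+\Trq(\gamma)y/\gamma\big)^2$, so $\rho=P^{1/2}$ is the spurious root and the preimage corresponds to $\rho=0$. That is indeed what $r^{1/3}+P/r^{1/3}$ returns there, since $r^2=P^3$. When $Q\neq0$ your bijectivity argument does work: then $y\notin\gamma\F_q$, so no root in $\F_q$ is spurious. The paper instead uses Lemma~\ref{L1} in that case.

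\textbf{Smaller points.}
\begin{enumerate}
\item Your identity $\Trq\big((y+\gamma s)^{2q-1}\big)=\Trq\big((y^q+\gamma^q s)^2(y+\gamma s)\big)/(y+\gamma s)^{q+1}$ is wrong; its right side equals $\Trq(y+\gamma s)$. The numerator must be $(y^q+\gamma^q s)^3$.
\item The $s^2$-coefficient simply vanishes, since $c_2\gamma^2=(c_2\gamma^q)\gamma^{2-q}\in\F_q$.
\item Your treatment of $r=0$ (``$P=0$ or the bracket vanishes, and the cubic degenerates'') needs justification. Write $h_2=h_2(y)$, $a=\Trq(\gamma y)^3/(\gamma^3N)$ and $b=\Trq(y)^3/N$. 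Then $P=(h_2+a)(h_2+b)$, so $r=(h_2+a)(h_2+b)^2$ vanishes only when $P=0$. You also need that $Q$ is a multiple of $P$. Without these facts, a vanishing bracket with $P\neq0$ would not give a degenerate cubic.
\end{enumerate}
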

\begin{proof}
 We use the same approach as in Theorem~\ref{Inverse3} and compute $s=h_2(f_2^{-1}(y))=h_2(y+\gamma s)$ as follows
 $$s= h_2(y)+\Trq(c_1 \gamma s+c_2\gamma^2s^2+(y+\gamma s)^{2q-1}+y^{2q-1}).$$
 Clearly, $f_2^{-1}(0)=0$. Let $x \neq 0$, or equivalently, $y \not \in \{0,\gamma s\}$. This further simplifies the above equation as a cubic $As^3+Bs^2+Cs+D=0$, where
 \[
  A = \gamma^3 y^{q+1} \in \F_q, \quad \quad B=\gamma^{q+1}(h_2(y)y^{q+1}+y^3+y^{3q}) \in \F_q,
 \]
 \[
  C = \gamma^{2q-1}y^{2q+2}+h_2(y)\Trq(\gamma y^{2q+1})+\Trq(\gamma y^{4q})\in \F_q, \qquad \quad D=h_2(y)y^{2q+2}\in \F_q.
 \]

Since $\gamma^{2q-1}=\gamma^{2-q}$, or equivalently $\gamma^{3(q-1)}=1$, that is $\gamma^3 \in \F_q$, we can deduce the following relations
\[
 (\gamma+\gamma^q)=\gamma^{2q-1}, \quad (\gamma+\gamma^q)^2=\gamma^{q+1} \quad \text{~and~} \quad (\gamma+\gamma^q)^3=\gamma^{3}.
\]
Since $\gamma \not \in \F_q$ and substituting $z=\Trq(\gamma)s$
\begin{align*}
 As^3+Bs^2+Cs+D=0 & \iff (\gamma+\gamma^q)^3(As^3+Bs^2+Cs+D)=0 \\
 & \iff Az^3+B(\gamma+\gamma^q)z^2+C(\gamma+\gamma^q)^2z+D(\gamma+\gamma^q)^3=0\\
 & \iff z^3+\frac{B(\gamma+\gamma^q)}{A}z^2+\frac{C(\gamma+\gamma^q)^2}{A}z+\frac{D(\gamma+\gamma^q)^3}{A}=0\\
 & \iff z^3+B'z^2+C'z+D'=0,
\end{align*}
where,
\[
 B' =  h_2(y)+\frac{(y+y^q)^3}{y^{q+1}} +y+y^q  \qquad  C'=y^{q+1}+\frac{h_2(y)\Trq(\gamma y^q)}{\gamma+\gamma^q}+\frac{\Trq(\gamma^q y)^4}{(\gamma+\gamma^q)^4y^{q+1}}  \qquad D'=h_2(y)y^{q+1}.
\]

Next, we depress the cubic $z^3+B'z^2+C'z+D'=0$ by substituting $z=u+B'$ to get $u^3+Pu+Q=0$, where $P=h_2(y)^2+h_2(y)\dfrac{\Trq(\gamma y^q)}{\gamma+\gamma^q}+\dfrac{\Trq(\gamma y)^3\Trq(y)^3}{(\gamma+\gamma^q)^3 y^{2q+2}}$ and $Q=\dfrac{\Trq(\gamma y^q)}{\gamma+\gamma^q}P$.The cubic resolvent of $u^3+Pu+Q=0$ is
\begin{equation}\label{resolv2}
  r^2+Qr+P^3=0.
\end{equation}
We distinguish three cases.

First, let $P=0$. Then also $Q=0$ (as $Q$ is a multiple of $P$), the
cubic reduces to $u^3=0$, so $u=0$ and
\[
  f_2^{-1}(y)=y+\frac{\gamma^{q-1}\big(h_2(y)\,y^{q+1}+\Trq(y^3)\big)}
                    {y^{q+1}}.
\]

Next, let $P\neq0$ and $Q=0$. Since $Q$ is a multiple of $P$, this
case occurs precisely when $\Trq(\gamma y^q)=0$. The
cubic reduces to $u^3+Pu=0$, whose roots in $\F_q$ are $u=0$ and
$u=P^{1/2}$, corresponding to the candidates $x_0=y+\gamma s_0$ and
$x_1=y+\gamma s_1$, where $s_0=B'/\Trq(\gamma)$ and
$s_1=\big(P^{1/2}+B'\big)/\Trq(\gamma)$. We claim that $x_1=0$
Indeed, $x_1=0 \iff s_1= y/\gamma \iff P=\left(B'+\frac{\Trq(\gamma)}{y}\right)^2$. Next, we compute that
\begin{align*}
  P & =h_2(y)^2+\frac{\Trq(\gamma y)^3\Trq(y)^3}{\Trq(\gamma)^3y^{2q+2}} =h_2(y)^2+\frac{\Trq(y)^6}{y^{2q+2}}\\
  &  =\Big(h_2(y)+\frac{(y+y^q)^3}{y^{q+1}}+y+y^q+\frac{\Trq(\gamma)}{\gamma}y\Big)^2\\
  &  =\Big(B'+\frac{\Trq(\gamma)}{\gamma}y\Big)^2,\\
\end{align*}
Hence $P^{1/2}+B'=\Trq(\gamma)y/\gamma$, i.e.\ $\gamma s_1=y$ and
$x_1=0$. On the other hand, since $f_2$ is a bijection and $y\neq0$, there is a unique preimage $x'\neq0$ of $y$, and
$s'=h_2(x')$ satisfies the cubic. As
the roots of the cubic are $s_0$ and $s_1$, and $s_1$ corresponds
to $x_1=0\neq x'$, we conclude $s'=s_0$, i.e.
$f_2^{-1}(y)=x_0$.

Finally, let $P\neq0$ and $Q\neq0$. The two roots of
\eqref{resolv2} are
\[
  r_1=P\Big(h_2(y)+\frac{\Trq(\gamma y)^3}{(\gamma+\gamma^q)^3y^{q+1}}\Big),
  \qquad
  r_2=P\Big(h_2(y)+\frac{\Trq(y)^3}{y^{q+1}}\Big),
\]
both in $\F_q$; indeed $r_1+r_2=Q$ and $r_1r_2=P^3$ by a direct
computation. In particular \eqref{resolv2} splits over $\F_q$, so
$\Tr_2^q(P^3/Q^2)=0\neq\Tr_2^q(1)$ (as $m$ is odd), and by
Lemma~\ref{L1} the cubic has a unique root in $\F_q$. Since
$\gcd(3,q-1)=1$, cube roots in $\F_q$ are unique, so
$r_1^{1/3},r_2^{1/3}$ are well defined; from $r_1r_2=P^3$ we get
$r_1^{1/3}r_2^{1/3}=P$, hence
$r_1^{1/3}+P/r_1^{1/3}=r_2^{1/3}+P/r_2^{1/3}$, and for
$r\in\{r_1,r_2\}$, $u=r^{1/3}+P/r^{1/3}$ satisfies
$u^3=r+P^3/r+Pu=Q+Pu$, i.e.\ $u$ is that unique root. Hence,
\begin{align*}
 f_2^{-1}(y) & =y+\gamma s =y+\gamma \frac{u+B'}{\Trq(\gamma)}\\
 & = y+ \frac{\gamma h_2(y)+\frac{\gamma (y+y^q)^3}{y^{q+1}} +\gamma \Trq(y)}{\Trq(\gamma)}+\frac{\gamma (r^{1/3}+\frac{P}{r^{1/3}})}{\Trq(\gamma)} \\
 & = y+ \frac{\gamma^{q-1}( h_2(y)y^{q+1} + \Trq(y^3))}{y^{q+1}}+\gamma^{q-1} \left(r^{1/3}+\frac{P}{r^{1/3}}\right).
\end{align*}
\end{proof}

\section{Quasi-multiplicative equivalence}\label{S5}

In this section, we investigate whether the permutation polynomials
constructed in this paper are quasi-multiplicative (QM) equivalent
to one another or to permutation polynomials already known in the
literature. We first recall the definition of QM equivalence,
introduced by Wu, Yuan, Ding and Ma~\cite{wu2017permutation}.

\begin{defn}\label{equiv}
Two permutation polynomials $f(X)$ and $g(X)$ in $\F_q[X]$ are
called quasi-multiplicative (QM) equivalent if there exists
an integer $1\le d<q-1$ with $\gcd(d,q-1)=1$ such that
$f(X)=\alpha\,g(\beta X^d)$ for some
$\alpha,\beta\in\F_q^{*}$.
\end{defn}

To establish QM inequivalence we follow the two-step strategy of
\cite{tu2018class}, adapted to our setting as follows. Let
$f(x)=\sum_{e\in E}f_ex^e$ and $g(X)=\sum_{s\in S}g_sx^s$ be
permutation polynomials of $\F_{q^2}$, written as sums of
monomials with nonzero coefficients, so that
$E,S\subseteq\{1,\dots,q^2-1\}$ are their exponent sets. To show
that $f$ and $g$ are not QM equivalent over $\F_{q^2}$, we proceed
as follows.

\smallskip
\textbf{Step 1.} Show that there is no integer $1\le d<q^2-1$ with
$\gcd(d,q^2-1)=1$ and
$E\pmod{q^2-1}=dS\pmod{q^2-1}$, where
$dS\pmod{q^2-1}=\{ds\pmod{q^2-1}\mid s\in S\}$. In particular, if
$|E|\neq|S|$, no such $d$ exists, since $s\mapsto ds$ is a
bijection of $S$ onto $dS$.

\smallskip
\textbf{Step 2.} If Step 1 fails for some $d$, compare the
coefficients of $f(x)$ with those of $a_1g(a_2x^d)$ and show that
no $a_1,a_2\in\F_{q^2}^{*}$ satisfy $f(X)=a_1g(a_2X^d)$.

\smallskip
In our situation, the polynomials $f_1$ and $f_2$ expand, via the
trace map, into sums of monomials with exponents in
\[
  E_1=\{1,2,3,q,2q,3q,q+2,2q+1\}
  \quad\text{and}\quad
  E_2=E_1\cup\{2q-1,\ q^2-q+1\},
\]
respectively. If no such $d$ exists, or if for every such $d$ no $a_1,a_2\in\F_{q^2}^{*}$
satisfy $f(X)=a_1g(a_2X^{d})$, then $f$ and $g$ are not QM equivalent.
If, for some such $d$, coefficients $a_1,a_2$ do exist, then $f$ and $g$ are
QM equivalent, written $f(X)\sim g(X)$. 

\begin{rmk}\label{rmk:qm1}
The two families $f_1$ and $f_2$ are not QM equivalent to each other. Suppose all the coefficients $c_1,c_2,c_3$ of $f_1$ and
$c_1',c_2',c_3'$ of $f_2$ are nonzero. Then every exponent in
$E_1$ (resp.\ $E_2$) carries a nonzero coefficient, except
possibly the exponent $1$, whose coefficient is $1+\gamma c_1$
(resp.\ $1+\gamma'c_1'$); hence $|E_1|\in\{7,8\}$ and
$|E_2|\in\{9,10\}$. Since a QM equivalence maps exponent sets
bijectively onto each other, $f_1\not\sim f_2$.
\end{rmk}

In particular, no member of $f_1$ with all coefficients
nonzero is QM equivalent to any such member of the second family;
the case with vanishing coefficients
coincide with (or are treated together with) the known classes
discussed below. We discuss first the QM equivalence between our families and the permutation polynomials proposed in Jiang~\cite{Jiang}.

\begin{prop}\label{qm-internal}
Let $f_1(x)=x+\gamma\Trq\big(c_1x+c_2x^2+c_3x^3+x^{q+2}\big)$ be a
permutation polynomial of $\F_{q^2}$ with
$c_2\in\F_{q^2}\setminus\F_q$, and let
$g(x)=x+\gamma'\Trq\big(c_1'x+c_2'x^2+c_3'x^3+x^{q+2}\big)$ with
$\gamma'\in\F_{q^2}^*$ and $c_1',c_2',c_3'\in\F_2$. Then
$f_1\not\sim g$.
\end{prop}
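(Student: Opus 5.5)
We compare coefficients monomial by monomial. First expand $f_1$ through the trace:
\[
f_1(x)=(1+\gamma c_1)x+\gamma c_1^q x^q+\gamma c_2x^2+\gamma c_2^q x^{2q}+\gamma c_3x^3+\gamma c_3^q x^{3q}+\gamma x^{q+2}+\gamma x^{2q+1}.
\]
The same expansion for $g$ gives coefficients $\gamma' c_i'$ and $\gamma' c_i'$ on $x^{i}$ and $x^{iq}$ (for $i=1$, the coefficient of $x$ is $1+\gamma'c_1'$). It also gives $\gamma'$ on both $x^{q+2}$ and $x^{2q+1}$. Since $c_i'\in\F_2$, every conjugate pair of exponents $\{e,eq\}$ carries \emph{equal} coefficients in $g$, except the pair $\{1,q\}$. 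In $f_1$, by contrast, the exponents $2$ and $2q$ both appear, with coefficients $\gamma c_2\neq\gamma c_2^q$ because $c_2\notin\F_q$. The proof will turn this asymmetry into a contradiction with $f_1(X)=a_1g(a_2X^d)$.

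\textbf{Step 1: restricting $d$.} Let $S$ be the exponent set of $g$. Suppose $E(f_1)\equiv dS\pmod{q^2-1}$ with $\gcd(d,q^2-1)=1$. Since $q$ is even, $\gcd(q-1,q+1)=1$, so by the CRT it suffices to determine $d$ modulo $q-1$ and modulo $q+1$.
\begin{itemize}
\item Modulo $q-1$, the set $E_1$ reduces to the residues $1,2,3$. The residue $3$ is hit by the four exponents $3,3q,q+2,2q+1$; the residue $2$ only by $2,2q$; the residue $1$ only by $1,q$.
\item Modulo $q+1$, the set $E_1$ reduces to $\pm1,\pm2,\pm3$, since $q\equiv-1$, $q+2\equiv1$ and $2q+1\equiv-1$.
\end{itemize}
Multiplication by $d$ must carry the residue pattern of $S$ onto that of $E(f_1)$. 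Here $E(f_1)$ contains the exponents $2,2q,q+2,2q+1$, and $S$ contains $q+2,2q+1$ since the leading term of $g$ is fixed. Counting fibres of the residues, I expect this to force $d\equiv1\pmod{q-1}$ and $d\equiv\pm1\pmod{q+1}$, hence $d\in\{1,q\}$.

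I expect this step to be the main obstacle. When some of $c_1',c_2',c_3'$ vanish, $S$ is small and the fibre counting is weaker. I would try first to use only the four guaranteed exponents $2,2q,q+2,2q+1$ of $f_1$ and the two guaranteed exponents $q+2,2q+1$ of $g$. If $|S|\neq|E(f_1)|$, Step~1 already concludes, so only the cases with equal cardinality need this analysis. Any residual small $q$ would be checked by computer, as done elsewhere in the paper.

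\textbf{Step 2: coefficient comparison for $d=1$.} Matching the coefficients of $x^{q+2}$ and $x^{2q+1}$ gives
\[
\gamma=a_1\gamma' a_2^{q+2}=a_1\gamma'a_2^{2q+1}.
\]
Hence $a_2^{q-1}=1$, that is, $a_2\in\F_q^*$. Since $\gamma c_2\neq0$, the exponent $2$ must occur in $g$, so $c_2'=1$. Matching $x^2$ and $x^{2q}$ then gives
\[
\gamma c_2=a_1\gamma' a_2^2 \quad\text{and}\quad \gamma c_2^q=a_1\gamma' a_2^{2q}=a_1\gamma'a_2^2 .
\]
Therefore $c_2=c_2^q$, contradicting $c_2\notin\F_q$.

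\textbf{Step 2: coefficient comparison for $d=q$.} Now $X^q$ swaps each conjugate pair of exponents, since $(2q+1)q\equiv q+2$ and $2q\cdot q\equiv2$. The same two comparisons again give $a_2^{q(q-1)}=1$, so $a_2\in\F_q$, and then $c_2=c_2^q$ once more.

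Hence no admissible $d$, $a_1$, $a_2$ exist, and $f_1\not\sim g$.
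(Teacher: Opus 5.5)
Your Step~2 is correct and is essentially the paper's coefficient comparison. The paper obtains $a_2\in\F_q^*$ from the exponents $q+2,2q+1$ exactly as you do, then reads off $c_2'=a_2^{-1}c_2$ (resp.\ $a_2^{-1}c_2^{q}$ for $d=q$); this is the same information as your direct comparison of $x^2$ with $x^{2q}$.

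The gap is Step~1, which you leave as an expectation. The residue-fibre counting you sketch is not carried out, and you yourself note it weakens when some $c_i'$ vanish. The missing idea, which is the reduction the paper's proof rests on, is that the coefficients of $x$ and $x^q$ in $g$, namely $1+\gamma'c_1'$ and $\gamma'c_1'$, cannot both vanish. So $S$ contains $1$ or $q$, hence $d$ or $dq$ lies in $E(f_1)\subseteq E_1$ modulo $q^2-1$. Since $qE_1\equiv E_1$, in fact $d\in E_1$. The elements of $E_1$ prime to $q^2-1$ are $1,q,2,2q$, plus $q+2,2q+1$ when $m$ is odd, because $3\mid q^2-1$ and $\gcd(q+2,q^2-1)=\gcd(q+2,3)$. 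Since $q+2\in S$, also $d(q+2)$ must reduce into $E_1$. But $2(q+2)=2q+4$, $2q(q+2)\equiv 4q+2$, $(q+2)^2\equiv 4q+5$ and $(2q+1)(q+2)\equiv 5q+4$ are not in $E_1$ once $q\ge 8$. So Step~1 takes a few lines for $q\ge 8$, with no fibre counting.

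For $q=4$, however, your expected conclusion is false: $2E_1\equiv E_1\pmod{15}$, so $d=2$ and $d=8$ pass Step~1. The paper's ``direct check'' misses this too. Your asymmetry argument cannot remove these $d$: for them the one asymmetric pair $\{1,q\}$ of $g$ is sent onto the pair $\{2,2q\}$ of $f_1$, so nothing forces $c_2\in\F_q$. Instead, compare the other pair.

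Take $d=2$ in $f_1(X)=a_1g(a_2X^2)$.
\begin{enumerate}
\item The monomials $X^6,X^9$ of $f_1$ come from $x^3,x^{12}$ of $g$. This gives $c_3'=1$, $a_2^{9}=1$ (so $a_2^3=1$) and $a_1\gamma'=\gamma$.
\item The monomials $X$ and $X^4$ come from $x^{8}$ and $x^2$ of $g$. This gives $1+\gamma c_1=\gamma c_2'a_2^2=\gamma c_1^4$.
\item Hence $\gamma\Trq(c_1)=1$, while $c_1^4=c_2'a_2^2\in\F_4$ forces $\Trq(c_1)=0$, a contradiction.
\end{enumerate}
The case $d=8$ is handled identically. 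So your small-$q$ caveat is genuinely needed, but this short computation settles it without a machine search.
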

\begin{proof}
The exponents of $f_1$ and $g$ modulo $q^2-1$ lie in the set
$E_1=\{1,2,3,q,2q,3q,q+2,2q+1\}$, where the coefficient of a
missing monomial is read as $0$. If $f_1\sim g$, then
$f_1(x)=a_1g(a_2x^d)$ for some $a_1,a_2\in\F_{q^2}^*$ and
$1\le d<q^2-1$ with $\gcd(d,q^2-1)=1$, and by Step~1 the map
$e\mapsto de$ must send $E_1$ into itself; since $d\cdot1\in E_1$,
we have $d\in E_1$, and a direct check of the remaining candidates
shows that only $d\in\{1,q\}$ is possible. It therefore remains to
rule out the existence of $a_1,a_2\in\F_{q^2}^*$ with
$a_1f_1(a_2x^d)=g$ for $d\in\{1,q\}$.

Let $d=1$ and suppose that such $a_1,a_2$ exist. Comparing the
coefficients at the two exponents $q+2$ and $2q+1$, we get
$a_1\gamma a_2^{q+2}=\gamma'=a_1\gamma a_2^{2q+1}$, whence
$a_2^{q-1}=1$, i.e.\ $a_2\in\F_q^*$, and
$\gamma'=a_1\gamma a_2^{3}$. Matching the exponents $q$ and $1$
then yields $c_1'=a_2^{-2}c_1$ and $a_1a_2=1$; the exponents
$2,3$ give $c_2'=a_2^{-1}c_2$ and $c_3'=c_3$, and the conjugate
exponents $2q,3q$ are consistent automatically. Therefore
\[
  (\gamma,c_1,c_2,c_3)\longmapsto
  \big(a_2^{2}\gamma,\ a_2^{-2}c_1,\ a_2^{-1}c_2,\ c_3\big).
\]
Since $a_2\in\F_q^*$ and $c_2\in\F_{q^2}\setminus\F_q$, we have
$c_2'=a_2^{-1}c_2\in\F_{q^2}\setminus\F_q$, contradicting
$c_2'\in\F_2$. Thus $f_1\not\sim g$ when $d=1$.

Next, consider $d=q$ and suppose that such $a_1,a_2$ exist. Using
a similar approach as for $d=1$, we get $a_1=a_2^{-1}\gamma^{q-1}$ and
\[
  (\gamma,c_1,c_2,c_3)\longmapsto
  \big(a_2^{2}\gamma^{q},\
  a_2^{-2}\big(\gamma^{q-1}c_1^{q}+\gamma^{-1}\big),\
  a_2^{-1}c_2^{q},\ c_3^{q}\big).
\]
Since $c_2\in\F_{q^2}\setminus\F_q$ implies
$c_2^{q}\in\F_{q^2}\setminus\F_q$, we again obtain
$c_2'=a_2^{-1}c_2^{q}\notin\F_q$, contradicting $c_2'\in\F_2$.
Thus $f_1\not\sim g$ in all cases, which completes the proof.
\end{proof}

\begin{rmk}\label{rmk:qm2}
Proposition~\ref{qm-internal} applies in particular to the
permutation polynomials of \cite{Jiang}, all of
which have coefficients $c_1',c_2',c_3'\in\{0,1\}$: no member of
our families with $c_2\in\F_{q^2}\setminus\F_q$ is QM equivalent
to any of them. On the other hand, the specializations
$c_1,c_2,c_3\in\{0,1\}$ of our families recover these known
classes. Thus our families properly contain the classes of
\cite{Jiang,CK}, and their members with
$c_2\in\F_{q^2}\setminus\F_q$ are genuinely new. The same
conclusion holds for members with $c_2\in\F_q$ but
$c_1\in\F_{q^2}\setminus\F_q$.
\end{rmk}

\begin{rmk}\label{rmk:qm-f2}
A similar analysis applies to the $f_2$. The exponents of $f_2$ lie in
$E_2=E_1\cup\{2q-1,\ q^2-q+1\}$, and the coefficients at the two
exponents $2q-1$ and $q^2-q+1$ are both equal to $\gamma$.
Moreover, multiplication by $q$ interchanges these two exponents
modulo $q^2-1$. Hence, comparing the coefficients at $2q-1$ and
$q^2-q+1$ in place of $q+2$ and $2q+1$, the argument of
Proposition~\ref{qm-internal} carries over verbatim: any QM
equivalence between $f_2$ and a polynomial of the same shape uses
$d\in\{1,q\}$, forces $a_2\in\F_q^*$, and transforms $c_2$ into
$a_2^{-1}c_2$ or $a_2^{-1}c_2^{\,q}$. Consequently, if
$c_2\in\F_{q^2}\setminus\F_q$, then $f_2$ is not QM equivalent to
any permutation polynomial of this shape whose coefficients lie
in $\F_2$; in particular, the permutation polynomials of
Theorem~\ref{Result2}(3) with $c_2\in\F_{q^2}\setminus\F_q$ are QM
inequivalent to the known classes discussed above.
\end{rmk}

\begin{rmk}\label{rmk:qm3}
Since QM equivalence preserves the number of monomials, only
permutation polynomials of $\F_{q^2}$ with at least seven
monomials could be QM equivalent to members of our families with
all coefficients nonzero. To the best of our knowledge, the known
classes of this size and shape are precisely those of \cite{Jiang} treated above; the numerous known classes of
permutation monomials, binomials and trinomials, as well as the
classes of \cite{Zha} and \cite{WPHY}, expand into fewer monomials
and are therefore QM inequivalent to these members. We note that
one class of \cite{Zha}, namely \cite[Corollary~1]{Zha}, is the
specialization $(c_1,c_2,c_3)=(0,0,1)$ of our first family and is
thus contained in Theorem~\ref{Result1}. Jiang et al.~\cite{Jiang} moreover
established, via Step~1, the QM inequivalence of two of their
permutation polynomials with the known classes of \cite{Zha}
\cite[Proposition~5.3]{Jiang}; for the specializations of our
families that coincide with the polynomials of \cite{Jiang}, this
result applies directly.
\end{rmk}

\section{Conclusion}\label{S6}
In this paper, we have characterized two families of permutation polynomials with 
coefficients in $\F_{q^2}$, and we have also determined their compositional inverses.  Since these families subsume several known 
results, they constitute a more general class containing both known and new permutation 
polynomials.
 We also verify that the new polynomials obtained are QM inequivalent to the 
known classes.

\end{document}